\crefname{hypothesis}{Hypothesis}{Hypotheses}
\title{On the Laplacian spread of digraphs\thanks{This work was supported by the AMS-Simons Travel Grants, which are administered by the American Mathematical Society with support from the Simons Foundation.}}
\author{Wayne Barrett\thanks{Department of Mathematics, Brigham Young University, Provo, UT (\email{wb@mathematics.byu.edu}, \email{ejevans@math.byu.edu}, \email{mkempton@mathematics.byu.edu}).}
\and Thomas R. Cameron\thanks{Department of Mathematics, Penn State Behrend, Erie, PA 
  (\email{trc5475@psu.edu}).}
\and Emily Evans\footnotemark[2]
\and H.~Tracy Hall\thanks{Hall Labs LLC, Provo, UT 
  (\email{H.Tracy@gmail.com}).}
\and Mark Kempton\footnotemark[2]}
\newcommand{\oset}[3][0ex]{%
 \mathbin{\mathop{#3}\limits^{%
  \vbox to#1{\kern-0.7\ex@
   \hbox{$\scriptstyle#2$}\vss}}}}
\newcommand{\djoin}{\oset{\rightarrow}{\vee}}
\DeclarePairedDelimiter{\floor}{\lfloor}{\rfloor}
\newcommand*\comp[1]{\overline{#1}}
\newcommand{\iu}{{i\mkern1mu}}
\newcommand\abs[1]{\left|#1\right|}
\newcommand\norm[1]{\left\Vert#1\right\Vert}
\newcommand\re[1]{\operatorname{Re}\mathclose{\left(#1\right)}}
\newcommand\conv[1]{\operatorname{conv}\mathclose{\left(#1\right)}}
\renewcommand\sp[1]{\operatorname{sp}\mathclose{\left(#1\right)}}
\begin{document}

\maketitle

% REQUIRED
\begin{abstract}
In this article, we extend the notion of the Laplacian spread to simple directed graphs (digraphs) using the restricted numerical range. 
First, we provide Laplacian spread values for several families of digraphs.
Then, we prove sharp upper bounds on the Laplacian spread for all polygonal and balanced digraphs. 
In particular, we show that the validity of the Laplacian spread bound for balanced digraphs is equivalent to the Laplacian spread conjecture for simple undirected graphs, which was conjectured in 2011 and proven in 2021. 
Moreover, we prove an equivalent statement for weighted balanced digraphs with weights between $0$ and $1$. 
Finally, we state several open conjectures that are motivated by empirical data. 
\end{abstract}
% REQUIRED
\begin{keywords}
numerical range; directed graph; Laplacian matrix; Laplacian spread; algebraic connectivity
\end{keywords}
% REQUIRED
\begin{AMS}
05C20, 05C50, 15A18, 15A60, 52B20
\end{AMS}

%%%%%%%%%%%%%%%%%%%%%%%%%%%%%%%%%%%%%%%%%%%%%%%%%%%%%%
%                    Introduction
%%%%%%%%%%%%%%%%%%%%%%%%%%%%%%%%%%%%%%%%%%%%%%%%%%%%%%
\section{Introduction}\label{sec:intro}
Let $G$ be an undirected and unweighted simple graph (no loops nor multi-edges) of order $n$.
Also, let $L$ be the Laplacian matrix of $G$ and denote its eigenvalues by
\[
0=\lambda_{1}(G)\leq\lambda_{2}(G)\leq\cdots\leq\lambda_{n}(G).
\]
In~\cite{Fiedler1973}, Fiedler defined the \emph{algebraic connectivity} of the graph $G$ by $\alpha(G) = \lambda_{2}(G)$.
A related and useful quantity is $\beta(G) = \lambda_{n}(G) = n - \alpha(\comp{G})$, where $\comp{G}$ denotes the complement of the graph $G$. 
The \emph{Laplacian spread} of the graph $G$ is defined by $\sp{G} = \beta(G)-\alpha(G)$.

The Laplacian spread of a graph has received significant attention in the literature (for example, see~\cite{Afshari2019,Afshari2018,Andrade2016,Bao2009,Chen2009,Einollahzadeh2021,Fan2008,Liu2010,You2012,Zhai2011} and the references therein). 
Most notably, for our purposes, in~\cite{You2012,Zhai2011}, it was conjectured that the Laplacian spread satisfies
\begin{equation}\label{eq:lap-spread-graphs}
\sp{G}\leq n-1,
\end{equation}
for all graphs $G$.
Note that, since $\alpha(\comp{G})=n-\lambda_{n}(G)$, it follows that~\eqref{eq:lap-spread-graphs} can be re-written as
\begin{equation}\label{eq:lap-spread-graphs2}
\alpha(G) + \alpha(\comp{G}) \geq 1.
\end{equation}
In~\cite{Afshari2019}, it is shown that~\eqref{eq:lap-spread-graphs2} holding for all graphs $G$ of order $n\geq 2$ is equivalent to the following statement:
For any two orthonormal vectors $\mathbf{x},\mathbf{y}\in\mathbb{R}^{n}$ with zero mean and $n\geq 2$, 
\begin{equation}\label{eq:lap-spread-graphs3}
\norm{\nabla_{\mathbf{x}}-\nabla_{\mathbf{y}}} \geq 2,
\end{equation}
where $\nabla_{\mathbf{x}}\in\mathbb{R}^{\binom{n}{2}}$ is defined as the vector whose $ij$ entry is equal to $\abs{x_{i}-x_{j}}$, for all $i<j$.
More recently, in~\cite[Theorem 1]{Einollahzadeh2021}, it was shown that~\eqref{eq:lap-spread-graphs2} holds for all graphs $G$.
Hence,~\eqref{eq:lap-spread-graphs3} holds for all orthonormal vectors $\mathbf{x},\mathbf{y}\in\mathbb{R}^{n}$ with zero mean and $n\geq 2$. 

In this article, we extend the notion of the Laplacian spread to digraphs using the restricted numerical range as defined in~\cite{Cameron2022_RNR,Cameron2021_RNR}.
Note that the restricted numerical range is a closed convex set in the complex plane, and the Laplacian spread of a digraph can be viewed geometrically as the length of the real part of its restricted numerical range.
We use the digraph characterizations via the restricted numerical range in~\cite{Cameron2021_RNR} to give Laplacian spread values for several families of digraphs, and we provide sharp bounds on the Laplacian spread for all polygonal digraphs as defined in~\cite{Cameron2022_RNR}.
Moreover, we show that the Laplacian spread of balanced digraphs of order $n$ is bounded above by $(n-1)$.
In particular, we show that this statement is equivalent to the statements in~\eqref{eq:lap-spread-graphs2} and~\eqref{eq:lap-spread-graphs3}.
Then, we prove an equivalent statement for balanced digraphs with weights in the interval $[0,1]$.
Finally, we provide empirical evidence that the strengthened bound on the Laplacian spread conjectured in~\cite{Barrett2022} also holds for balanced digraphs, and we include several open conjectures regarding the families of digraphs that attain certain spread values.
%%%%%%%%%%%%%%%%%%%%%%%%%%%%%%%%%%%%%%%%%%%%%%%%%%%%%%
%            The Restricted Numerical Range
%%%%%%%%%%%%%%%%%%%%%%%%%%%%%%%%%%%%%%%%%%%%%%%%%%%%%%
\section{The Restricted Numerical Range}\label{sec:rnr}
Let $\Gamma = (V,E)$ denote an unweighted simple digraph of order $n$, where $V$ is the vertex set and $E\subseteq V\times V$ is the edge set. 
We denote the \emph{out-degree} of vertex $i\in V$ by $d^{+}(i)$, which is equal to the number of edges of the form $(i,j)\in E$.
Similarly, we denote the \emph{in-degree} of vertex $i\in V$ by $d^{-}(i)$, which is equal to the number of edges of the form $(j,i)\in E$. 
After indexing the vertex set as $V=\left\{1,2,\ldots,n\right\}$, we define the \emph{adjacency matrix} of $\Gamma$ by $A=[a_{ij}]_{i,j=1}^{n}$, where $a_{ij}=1$ if $(i,j)\in E$ and $a_{ij}=0$ otherwise. 
Moreover, we define the \emph{Laplacian matrix} of $\Gamma$ by $L=D-A$, where $A$ is the adjacency matrix of $\Gamma$ and $D$ is a diagonal matrix whose $i$th diagonal entry is $d^{+}(i)$.
We use functional notation to indicate the particular digraph when it is unclear from context; for example, $L(\Gamma)$ denotes the Laplacian matrix of the digraph $\Gamma$.

In general, the \emph{numerical range} (or \emph{field of values}) of a complex matrix $A\in\mathbb{C}^{n\times n}$ is defined as follows~\cite{Kippenhahn1951,Zachlin2008}:
\[
W(A) = \left\{\mathbf{x}^{*}A\mathbf{x}\colon~\mathbf{x}\in\mathbb{C}^{n},~\norm{\mathbf{x}}=1\right\},
\]
where $\norm{\cdot}$ denotes the Euclidean norm on complex vectors.
Since $L\mathbf{e}=0$ for any Laplacian matrix, where $\mathbf{e}$ is the all ones vector, we are interested in the \emph{restricted numerical range} of the Laplacian matrix, which is defined as follows~\cite{Cameron2021_RNR}:
\[
W_{r}(L) = \left\{ \mathbf{x}^{*}L\mathbf{x}\colon~\mathbf{x}\in\mathbb{C}^{n},~\mathbf{x}\perp\mathbf{e},~\norm{\mathbf{x}}=1\right\}.
\]
Clearly, $W_{r}(L)=\emptyset$ when $n=1$.
When convenient, we may refer to $W_{r}(L)$ as the restricted numerical range of a digraph, and mix the notation $W_{r}(L)$ with $W_{r}(\Gamma)$.

The definition of the restricted numerical range is motivated by its close connection to the algebraic connectivity for digraphs, which is defined as follows~\cite{Wu2005}:
The \emph{algebraic connectivity} of $\Gamma$ is given by
\[
\alpha(\Gamma) = \min_{\substack{\mathbf{x}\perp\mathbf{e} \\ \norm{\mathbf{x}}=1}}\mathbf{x}^{T}L\mathbf{x}.
\]
Another related and useful quantity is
\[
\beta(\Gamma) = \max_{\substack{\mathbf{x}\perp\mathbf{e} \\ \norm{\mathbf{x}}=1}}\mathbf{x}^{T}L\mathbf{x}.
\]
The proposition below summarizes this connection and the basic properties of the restricted numerical range~\cite{Cameron2022_RNR,Cameron2021_RNR}.
Note that we define a \emph{restrictor matrix} of order $n$ as an $n\times(n-1)$ matrix whose columns form an orthonormal basis for $\mathbf{e}^{\perp}$.
Given a restrictor matrix $Q$ of order $n$, we reference the matrix $Q^{*}LQ$ as a \emph{restricted Laplacian}.
Also, we use $\comp{\Gamma}$ to denote the \emph{complement digraph} of $\Gamma$, that is, the digraph whose edge set consists exactly of those directed edges not in $\Gamma$.
Finally, since the eigenvectors of $L$ corresponding to the zero eigenvalue are linearly independent~\cite[Theorem 2.4]{Cameron2020}, we are justified in referencing the zero eigenvalue corresponding to the eigenvector $\mathbf{e}$.
%%%%%%%%%%%%%%%%%%%%%%
% Proposition 2.1    %
%%%%%%%%%%%%%%%%%%%%%%
\begin{proposition}\label{prop:basic-rnr}
Let $\Gamma$ be a simple digraph of order $n$ and let $L$ be the Laplacian matrix of $\Gamma$.
Then, the following hold:
\begin{enumerate}[(i)]
\item For any restrictor matrix $Q$ of order $n$, the restricted numerical range satisfies $W_{r}(L)=W(Q^{*}LQ)$.
\item The set $W_{r}(L)$ is invariant under re-ordering of the vertices of $\Gamma$.
\item The eigenvalues of $L$ are contained in $W_{r}(L)$, except (possibly) for the zero eigenvalue associated with the eigenvector $\mathbf{e}$.
\item The minimum real part of $W_{r}(L)$ is equal to $\alpha(\Gamma)$ and the maximum real part of $W_{r}(L)$ is equal to $\beta(\Gamma)$.
\item Let $\comp{L}$ denote the Laplacian matrix of $\comp{\Gamma}$. Then, $W_{r}(\comp{L}) = n - W_{r}(L)$.
\end{enumerate}
\end{proposition}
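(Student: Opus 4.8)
The plan is to reduce every item about $W_r(L)$ to an ordinary numerical range of the $(n-1)\times(n-1)$ matrix $Q^*LQ$, beginning with item (i), which makes that reduction precise. For (i) I would fix a restrictor matrix $Q$ and use that its columns form an orthonormal basis of $\mathbf{e}^{\perp}$: every unit vector $\mathbf{x}\perp\mathbf{e}$ is $\mathbf{x}=Q\mathbf{y}$ for a unique $\mathbf{y}\in\mathbb{C}^{n-1}$, and since $Q^{*}Q=I_{n-1}$ we get $\norm{\mathbf{y}}=\norm{\mathbf{x}}=1$ together with $\mathbf{x}^{*}L\mathbf{x}=\mathbf{y}^{*}(Q^{*}LQ)\mathbf{y}$. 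This is a bijection between the two defining sets, so $W_r(L)=W(Q^{*}LQ)$. Item (ii) is then a change of variables: relabeling the vertices replaces $L$ by $PLP^{T}$ for a permutation matrix $P$, and since $P\mathbf{e}=\mathbf{e}$, the substitution $\mathbf{y}=P^{T}\mathbf{x}$ preserves the norm, the constraint $\mathbf{x}\perp\mathbf{e}$, and the value $\mathbf{x}^{*}L\mathbf{x}$, leaving $W_r(L)$ unchanged.

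For item (v) I would compute directly. Because $\Gamma$ is loopless, $A+\comp{A}=J-I$ with $J=\mathbf{e}\mathbf{e}^{T}$, and the out-degree matrices satisfy $D+\comp{D}=(n-1)I$; adding the two Laplacians gives $L+\comp{L}=nI-J$, that is, $\comp{L}=nI-J-L$. For any unit $\mathbf{x}\perp\mathbf{e}$ we have $\mathbf{x}^{*}J\mathbf{x}=\abs{\mathbf{e}^{*}\mathbf{x}}^{2}=0$, so $\mathbf{x}^{*}\comp{L}\mathbf{x}=n-\mathbf{x}^{*}L\mathbf{x}$; ranging over all such $\mathbf{x}$ yields $W_r(\comp{L})=n-W_r(L)$.

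The substantive step is item (iii). Here I would adjoin the normalized all-ones vector to a restrictor matrix to form the unitary $U=[\,Q\mid n^{-1/2}\mathbf{e}\,]$. Since $L\mathbf{e}=\mathbf{0}$, the last column of $LU$ vanishes, so $U^{*}LU$ is block lower triangular with diagonal blocks $Q^{*}LQ$ and a $1\times 1$ zero block. Consequently the characteristic polynomial of $L$ factors as $\det{L-\mu I}=(-\mu)\,\det{Q^{*}LQ-\mu I}$, so the spectrum of $L$ is exactly the spectrum of $Q^{*}LQ$ together with one additional zero, which is the eigenvalue attached to the eigenvector $\mathbf{e}$ (and is well defined by the cited independence of the null vectors). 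Since every eigenvalue of a square matrix lies in its numerical range, each eigenvalue of $Q^{*}LQ$ lies in $W(Q^{*}LQ)=W_r(L)$ by (i); this is precisely every eigenvalue of $L$ except the distinguished zero. I expect the main obstacle to be arguing carefully that it is exactly this zero, and not an arbitrary one, that may be absent from $W_r(L)$—the block-triangular factorization through $U$ is what pins this down.

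Finally, item (iv) follows by projecting the numerical range onto the real axis. Writing $H=\tfrac12(L+L^{T})$ for the symmetric part, the standard identity $\re{\mathbf{x}^{*}L\mathbf{x}}=\mathbf{x}^{*}H\mathbf{x}$ shows that the real parts occurring in $W_r(L)$ form the set $W(Q^{*}HQ)$, which, as $Q^{*}HQ$ is Hermitian, is the interval $[\lambda_{\min}(Q^{*}HQ),\lambda_{\max}(Q^{*}HQ)]$. Taking $Q$ real and noting that $\mathbf{x}^{T}L\mathbf{x}=\mathbf{x}^{T}H\mathbf{x}$ for real $\mathbf{x}$, the Rayleigh-quotient characterization identifies $\lambda_{\min}(Q^{*}HQ)$ with $\alpha(\Gamma)$ and $\lambda_{\max}(Q^{*}HQ)$ with $\beta(\Gamma)$. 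The only point needing care is that the extreme eigenvalues of the real symmetric matrix $Q^{*}HQ$ are attained on real vectors, so the complex minimum and maximum over $\mathbf{e}^{\perp}$ agree with the real optimizations in the definitions of $\alpha$ and $\beta$.
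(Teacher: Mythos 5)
Your proof is correct. The paper itself does not prove Proposition~\ref{prop:basic-rnr} --- it cites \cite{Cameron2021_RNR,Cameron2022_RNR} --- but your arguments are exactly the standard ones and match the machinery the paper deploys elsewhere: your block-triangularization via $U=[\,Q\mid n^{-1/2}\mathbf{e}\,]$ for item (iii) is precisely Lemma~\ref{lem:restricted-spectra}, and your Hermitian-part projection for item (iv) is the argument the paper invokes (via \cite{Kippenhahn1951,Zachlin2008}) at the start of Section~\ref{sec:balanced}.
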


The theorem below summarizes the known characterizations of digraphs using the restricted numerical range~\cite{Cameron2021_RNR}.
Note that the \emph{directed join} of the digraphs $\Gamma_{1} = (V_{1},E_{1})$ and $\Gamma_{2}=(V_{2},E_{2})$, where $V_{1}\cap V_{2}=\emptyset$, is defined by
\[
\Gamma_{1}\djoin\Gamma_{2} = \left(V_{1}\sqcup V_{2},E_{1}\sqcup E_{2}\sqcup\left\{(i,j)\colon~i\in V_{1},\ j\in V_{2}\right\}\right).
\]
Let $K_{n}$ and $\comp{K_{n}}$ denote the complete and empty digraphs of order $n$, respectively.
Then, the \emph{$k$-imploding} star is defined by $\Gamma=\comp{K_{n-k}}\djoin K_{k}$, for some $k\in\{0,1,\ldots,n\}$.
Also, a \emph{regular tournament} is defined as a tournament digraph of order $n$, where $n\geq 3$ is odd and the in-degree and out-degree of each vertex is equal to $(n-1)/2$.
Finally, $\Gamma = (V,E)$ is said to be \emph{bidirectional} if for every $(i,j)\in E$, we have $(j,i)\in E$.
%%%%%%%%%%%%%%%%%%%%%%
% Theorem 2.2        %
%%%%%%%%%%%%%%%%%%%%%%
\begin{theorem}\label{thm:rnr_char}
Let $\Gamma$ be a digraph of order $n$  and let $L$ be the Laplacian matrix of $\Gamma$.
Then, the following characterizations hold:
\begin{enumerate}[(i)]
\item $\Gamma$ is a dicycle (directed cycle) of order $n$ if and only if $W_{r}(L)$ is a complex polygon with vertices
\[
\left\{1-e^{\iu 2\pi j/n}\colon j=1,\ldots,n-1\right\},
\]
where $\iu$ denotes the imaginary unit.
\item $\Gamma$ is a $k$-imploding star if and only if $W_{r}(L)$ is a single point. Moreover, the numerical value of this point is $k$.
\item $\Gamma$ is a regular tournament if and only if $W_{r}(L)$ is a vertical line segment. Moreover, this vertical line segment has real part $n/2$.
\item $\Gamma$ is a directed join of two bidirectional digraphs, where one may be the null digraph, if and only if $W_{r}(L)$ is a horizontal line segment. Moreover, this line segment lies on the non-negative portion of the real axis. 
\end{enumerate}
\end{theorem}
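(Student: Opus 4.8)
The plan is to prove each of the four characterizations in Theorem~\ref{thm:rnr_char} by exploiting Proposition~\ref{prop:basic-rnr}(i), which lets me replace the restricted numerical range $W_r(L)$ with the ordinary numerical range $W(Q^*LQ)$ of an $(n-1)\times(n-1)$ matrix, and then invoke the classical theory of the numerical range (Kippenhahn's theorem, the fact that $W(M)$ is a single point iff $M$ is scalar, that $W(M)$ is a line segment iff $M$ is essentially normal with collinear spectrum, etc.). Throughout, I would fix a convenient restrictor matrix $Q$; a natural choice is the matrix whose columns come from the discrete Fourier basis on $\mathbf{e}^\perp$, since for circulant-type structures (dicycles, regular tournaments) this diagonalizes or nearly diagonalizes $Q^*LQ$.

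For part~(i), the forward direction is a direct computation: the Laplacian of a dicycle is the circulant $I - P$ where $P$ is the cyclic shift, whose eigenvalues are exactly $1 - e^{\iu 2\pi j/n}$. Since $L$ is normal, $W(Q^*LQ) = W_r(L)$ is the convex hull of the eigenvalues other than the one at $\mathbf{e}$, giving precisely the stated complex polygon. For the converse I would argue that if $W_r(L)$ is a polygon with these $n-1$ specified vertices, then $Q^*LQ$ is normal with a prescribed spectrum lying in convex position, which forces $L$ itself to be normal with eigenvalues $\{1 - e^{\iu 2\pi j/n}\}$; a degree/integrality argument on the $0/1$ adjacency matrix then pins $\Gamma$ down to the dicycle. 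For part~(ii), $W_r(L)$ is a single point iff $Q^*LQ$ is a scalar matrix $kI_{n-1}$, i.e.\ $L$ acts as $kI$ on $\mathbf{e}^\perp$; I would translate this spectral condition back into a combinatorial one on $A$ and identify it with the $k$-imploding star $\comp{K_{n-k}} \djoin K_k$, reading off the value $k$ from the trace.

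For parts~(iii) and~(iv) the key tool is the decomposition $L = H + S$ into its Hermitian part $H = \tfrac12(L + L^T)$ and skew-Hermitian part $S = \tfrac12(L - L^T)$, so that the real part of $W_r(L)$ is governed by $Q^*HQ$ and the imaginary part by $Q^*SQ$. A vertical line segment means $\operatorname{Re} W_r(L)$ is a single value $n/2$, i.e.\ $Q^*HQ = \tfrac{n}{2}I$; since $H$ is the Laplacian of the underlying undirected graph plus the symmetric correction, this condition forces every vertex to have total degree $n-1$ with the tournament balance $d^+(i) = d^-(i) = (n-1)/2$, which is exactly a regular tournament. Dually, a horizontal line segment means $\operatorname{Im} W_r(L) = \{0\}$, forcing $Q^*SQ = 0$, hence $S = 0$ on $\mathbf{e}^\perp$; this says $A$ is symmetric, i.e.\ $\Gamma$ is bidirectional, and the directed-join structure would emerge from analyzing when the resulting symmetric Laplacian has its restricted spectrum confined to the non-negative real axis with the join producing the interval endpoints (one may appeal here to Proposition~\ref{prop:basic-rnr}(v) to handle the complementation and the null-digraph edge case).

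I expect the main obstacle to be the converse directions, particularly establishing that a prescribed shape of $W_r(L)$ forces a specific combinatorial structure rather than merely a spectral one. The numerical range sees only $Q^*LQ$ up to unitary similarity, so recovering the $0/1$ integrality of $A$ and the exact adjacency pattern from geometric data about $W_r(L)$ is the delicate step; I anticipate needing Kippenhahn's theorem to control when a non-normal matrix can still have a line-segment numerical range (this is where non-normality is compatible with a degenerate numerical range), together with careful use of the trace and the constraint that diagonal entries of $L$ are out-degrees and off-diagonal entries lie in $\{0,-1\}$. The polygonal case~(i) should be the cleanest because normality makes the numerical range the convex hull of the spectrum, whereas~(iii) requires the most care since regular tournaments are genuinely non-normal yet still yield a degenerate (line-segment) numerical range.
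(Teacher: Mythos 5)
First, a point of comparison: the paper does not prove Theorem~\ref{thm:rnr_char} at all --- it is imported from~\cite{Cameron2021_RNR} as a summary of known characterizations, so there is no in-paper proof to measure your attempt against. Your overall strategy (pass to $W(Q^{*}LQ)$ via Proposition~\ref{prop:basic-rnr}(i), classify the degenerate shapes a numerical range can take, and split into Hermitian and skew-Hermitian parts) is the natural route and is consistent with how these results are established in the cited source.

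There is, however, a genuine gap in your part (iv), and it sits exactly where the content of that statement lives. From $W_{r}(L)\subset\mathbb{R}$ you correctly deduce $Q^{*}SQ=0$ for the skew part $S=\tfrac12(A^{T}-A)$, but you then assert ``hence $S=0$ on $\mathbf{e}^{\perp}$; this says $A$ is symmetric, i.e.\ $\Gamma$ is bidirectional.'' A vanishing compression does not force the matrix to vanish. What $Q^{*}SQ=0$ actually yields is $A^{T}-A=\mathbf{e}\mathbf{w}^{T}-\mathbf{w}\mathbf{e}^{T}$ for some vector $\mathbf{w}$, i.e.\ $a_{ji}-a_{ij}=w_{j}-w_{i}$; since these differences lie in $\{-1,0,1\}$, the entries of $\mathbf{w}$ take at most two values differing by $1$, each level set induces a bidirectional subdigraph, and all edges between the two level sets point the same way --- that rank-two skew correction is precisely what produces the directed join of two bidirectional digraphs. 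As written, your argument would conclude that every digraph with a horizontal-segment restricted numerical range is bidirectional, contradicting the theorem itself and the example $K_{2}\djoin\comp{K_{4}}$ in Figure~\ref{fig:rnr_char_ex}. The same subtlety (a scalar compression only determines $L$, or $2D-A-A^{T}$, modulo terms of the form $\mathbf{e}\mathbf{u}^{T}$, resp.\ $\mathbf{e}\mathbf{u}^{T}+\mathbf{u}\mathbf{e}^{T}$) is glossed over in (ii) and (iii); there the integrality of the entries happens to rescue your stated conclusions, but the argument must be made explicitly. Finally, the converse of (i) needs the corner-point (reducing eigenvalue) theorem to extract normality of $Q^{*}LQ$ from the fact that the polygon has $n-1$ vertices before your spectral and degree bookkeeping can begin.
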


%%%%%%%%%%%%%%%%%%%%%%
% Figure 1     	 	 %
%%%%%%%%%%%%%%%%%%%%%%
\begin{figure}[ht]
\centering
\includegraphics[width=1.0\textwidth]{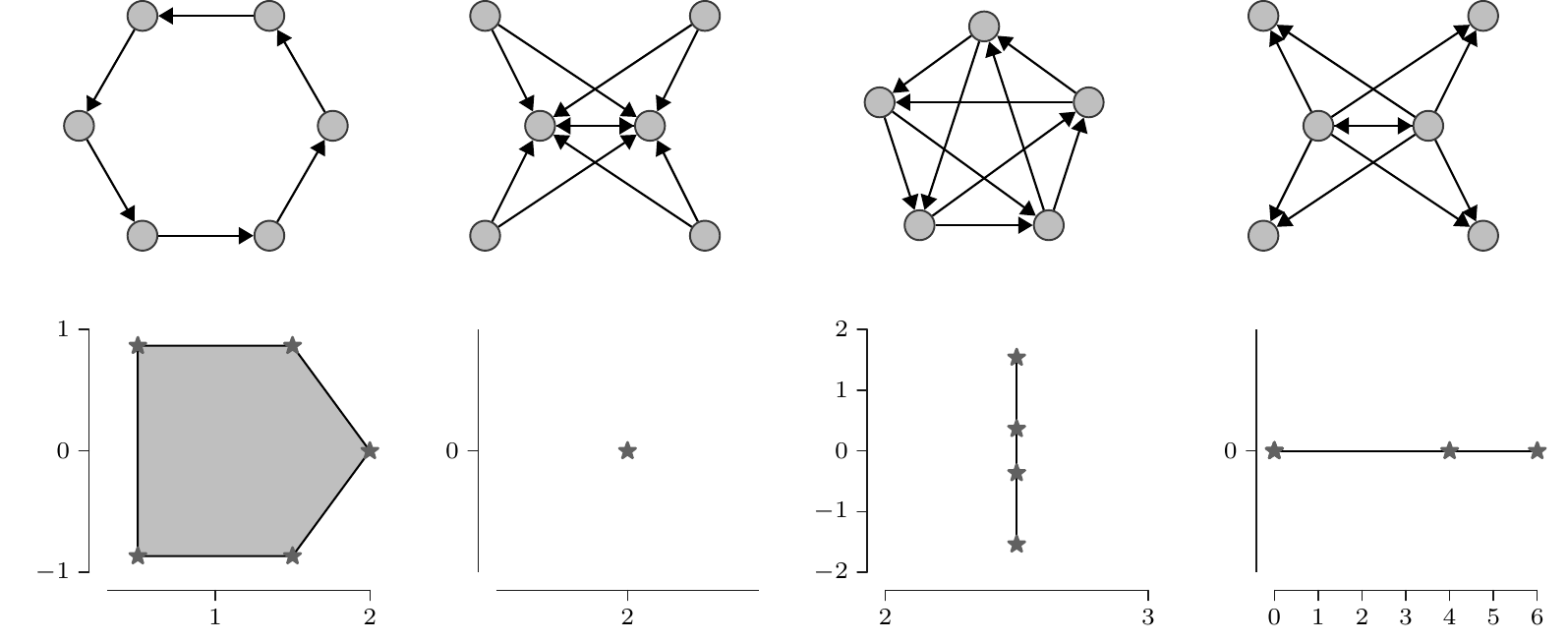}
\caption{Examples digraphs from each part of Theorem~\ref{thm:rnr_char}.}
\label{fig:rnr_char_ex}
\end{figure}
In Figure~\ref{fig:rnr_char_ex}, examples from each part of Theorem~\ref{thm:rnr_char} are shown.
Note that the digraph is shown above with the restricted numerical range below, and the eigenvalues of $Q^{*}LQ$ are displayed using the star symbol.
Each example can be viewed more generally as a \emph{polygonal digraph}, which is defined as a digraph whose restricted numerical range is equal to a convex polygon in the complex plane, that is, the convex hull of the eigenvalues of $Q^{*}LQ$~\cite{Cameron2022_RNR}.
In fact, parts (ii)--(iv) of Theorem~\ref{thm:rnr_char} characterize all degenerate polygonal digraphs, that is, digraphs with a restricted numerical range equal to a point or line segment. 
In general, polygonal digraphs can be split into three classes:
\emph{Normal digraphs} are those that have a normal Laplacian matrix, and therefore a normal restricted Laplacian matrix. \emph{Restricted-normal digraphs} are those whose Laplacian matrix is not normal but whose restricted Laplacian matrix is normal.
\emph{Pseudo-normal} digraphs are those whose Laplacian matrix and restricted Laplacian matrix are not normal but whose restricted numerical range is polygonal. 
Figure~\ref{fig:classes} provides an example from each class.
%%%%%%%%%%%%%%%%%%%%%%
% Figure 2           %
%%%%%%%%%%%%%%%%%%%%%%
\begin{figure}[ht]
\centering
\includegraphics[width=0.75\textwidth]{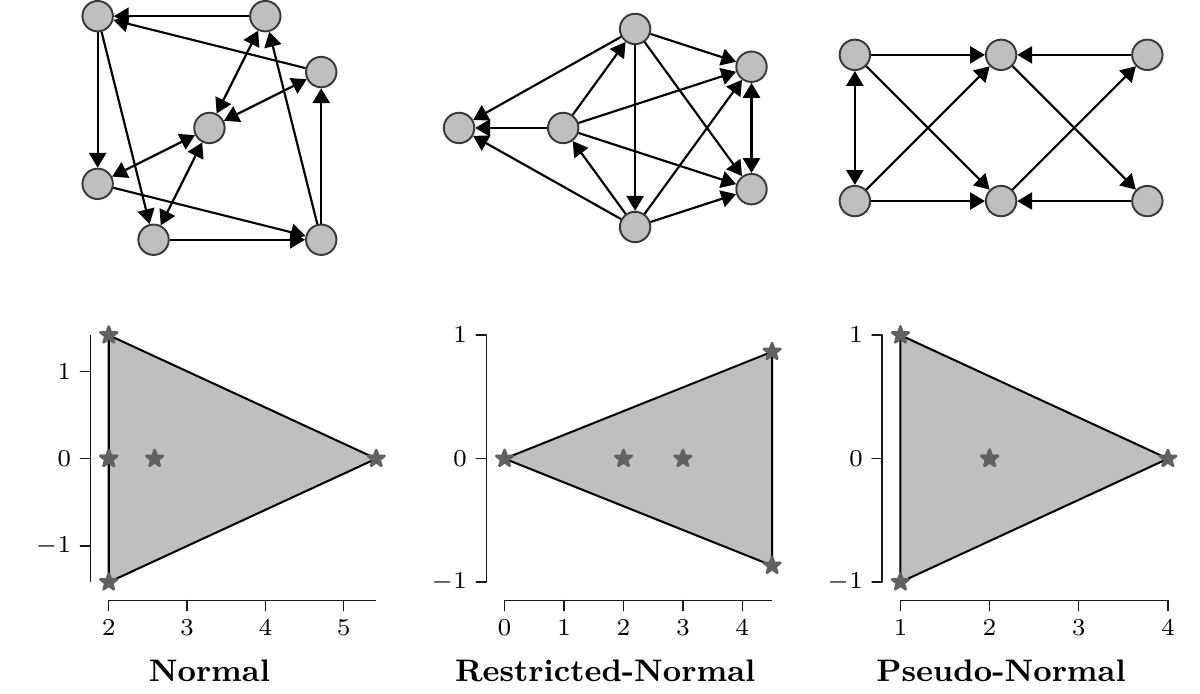}
\caption{Examples from each class of polygonal digraphs.}
\label{fig:classes}
\end{figure}

The dicycle and regular tournament shown in Figure~\ref{fig:rnr_char_ex} are also normal digraphs, since any digraph whose Laplacian matrix, possibly after re-ordering of vertices, can be written as a circulant matrix will be normal. 
Also, note that the restricted-normal digraph in Figure~\ref{fig:classes} can be viewed as the directed join of two normal digraphs, that is, an order $3$ dicycle and the disjoint union of $K_{2}$ and $K_{1}$. 
It turns out that the directed join of two normal digraphs is always a restricted-normal digraph~\cite[Proposition 3.6]{Cameron2022_RNR}; therefore, the $2$-imploding star and directed join of $K_{2}$ and $\comp{K_{4}}$ in Figure~\ref{fig:rnr_char_ex} are also restricted normal digraphs.
Finally, note that by~\cite[Theorem 3]{Johnson1976}, pseudo-normal digraphs have a restricted-normal Laplacian matrix that is unitarily similar to $D\oplus C$, where $D$ is a diagonal matrix, $C$ is not normal, and $W(C)\subset W(D)$.
%%%%%%%%%%%%%%%%%%%%%%%%%%%%%%%%%%%%%%%%%%%%%%%%%%%%%%
%            The Laplacian Spread of Digraphs
%%%%%%%%%%%%%%%%%%%%%%%%%%%%%%%%%%%%%%%%%%%%%%%%%%%%%%
\section{The Laplacian Spread of Digraphs}\label{sec:spread}
We define the \emph{Laplacian spread} of a digraph $\Gamma$ by
\[
\sp{\Gamma} = \beta(\Gamma) - \alpha(\Gamma),
\]
that is, $\sp{\Gamma}$ is equal to the length of the real part of $W_{r}(L)$. 
The following results are direct corollaries of Theorem~\ref{thm:rnr_char}.
%%%%%%%%%%%%%%%%%%%%%%
% Corollary 3.1      %
%%%%%%%%%%%%%%%%%%%%%%
\begin{corollary}\label{cor:cycle_spread}
If $\Gamma$ is a directed cycle of order $n$, then the Laplacian spread satisfies
\[
\sp{\Gamma} = 
\begin{cases}
1+\cos\left(\frac{2\pi}{n}\right) & \text{if $n$ is even,} \\
\cos\left(\frac{2\pi}{n}\right) - \cos\left(\pi\frac{n-1}{n}\right) & \text{if $n$ is odd.} \\
\end{cases}
\]
\end{corollary}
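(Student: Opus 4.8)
The plan is to read the spread directly off the description of $W_{r}(L)$ furnished by Theorem~\ref{thm:rnr_char}(i), combined with the identification in Proposition~\ref{prop:basic-rnr}(iv) of $\alpha(\Gamma)$ and $\beta(\Gamma)$ with the extreme real parts of the restricted numerical range. For a dicycle of order $n$, the set $W_{r}(L)$ is the convex hull of the points $1-e^{\iu 2\pi j/n}$ for $j=1,\ldots,n-1$. Since the real part is an affine functional, its maximum and minimum over a convex polygon are attained at vertices, so it suffices to extremize $\re{1-e^{\iu 2\pi j/n}} = 1-\cos\!\left(2\pi j/n\right)$ over $j\in\{1,\ldots,n-1\}$. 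Thus $\beta(\Gamma)=1-\min_{j}\cos\!\left(2\pi j/n\right)$ and $\alpha(\Gamma)=1-\max_{j}\cos\!\left(2\pi j/n\right)$, so that $\sp{\Gamma}=\beta(\Gamma)-\alpha(\Gamma)=\max_{j}\cos\!\left(2\pi j/n\right)-\min_{j}\cos\!\left(2\pi j/n\right)$.

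Next I would locate these extrema. The maximum of $\cos\!\left(2\pi j/n\right)$ over $j=1,\ldots,n-1$ is attained at $j=1$ and $j=n-1$, the admissible multiples of $2\pi/n$ whose angles lie closest to $0$ modulo $2\pi$, giving $\max_{j}\cos\!\left(2\pi j/n\right)=\cos\!\left(2\pi/n\right)$ in every case. The minimum depends on the parity of $n$, since it is attained at the index whose angle lies closest to $\pi$. When $n$ is even, $j=n/2$ gives exactly $\cos\pi=-1$, so $\min_{j}\cos\!\left(2\pi j/n\right)=-1$ and hence $\sp{\Gamma}=\cos\!\left(2\pi/n\right)-(-1)=1+\cos\!\left(2\pi/n\right)$. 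When $n$ is odd, no index lands on $\pi$; the two indices nearest $\pi$, namely $j=(n-1)/2$ and $j=(n+1)/2$, both yield $\cos\!\left(\pi\pm\pi/n\right)=-\cos\!\left(\pi/n\right)$, so the minimum equals $-\cos\!\left(\pi/n\right)$.

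Finally I would assemble the odd case and rewrite it in the stated form. Here the spread is $\cos\!\left(2\pi/n\right)-\left(-\cos\!\left(\pi/n\right)\right)=\cos\!\left(2\pi/n\right)+\cos\!\left(\pi/n\right)$, and using $\cos\!\left(\pi\tfrac{n-1}{n}\right)=\cos\!\left(\pi-\pi/n\right)=-\cos\!\left(\pi/n\right)$ this is exactly $\cos\!\left(2\pi/n\right)-\cos\!\left(\pi\tfrac{n-1}{n}\right)$, matching the claimed formula. The only point demanding a genuine (if modest) argument is verifying that these particular indices really do extremize $\cos\!\left(2\pi j/n\right)$ — that is, that the monotonicity of cosine on $[0,\pi]$ forces the extremes to the indices nearest $0$ and nearest $\pi$ — which I regard as the main obstacle; everything else follows by direct substitution once the extremizing indices are pinned down.
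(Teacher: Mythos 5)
Your proposal is correct and follows essentially the same route as the paper's proof: both read off $\alpha(\Gamma)$ and $\beta(\Gamma)$ from the vertices $1-e^{\iu 2\pi j/n}$ of $W_{r}(L)$ given by Theorem~\ref{thm:rnr_char}(i), finding the extremes of $\cos\left(2\pi j/n\right)$ at $j=1$ and at $j=\floor{n/2}$ and splitting on the parity of $n$. Your version merely makes explicit the monotonicity argument pinning down the extremizing indices, which the paper leaves implicit.
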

\begin{proof}
By Theorem~\ref{thm:rnr_char}(i), the restricted numerical range of a directed cycle of order $n$ satisfies
\[
W_{r}(L) = \left\{1-e^{\iu 2\pi j/n}\colon j=1,\ldots,n-1\right\}.
\]
Hence, the algebraic connectivity of $\Gamma$ satisfies
\[
\alpha(\Gamma) = \re{1-e^{\iu 2\pi/n}} = 1 - \cos\left(2\pi/n\right).
\]
Furthermore, we have
\begin{align*}
\beta(\Gamma) &= \re{1-e^{\iu 2\pi\floor{n/2}/n}} \\
&= \begin{cases} 2 & \text{if $n$ is even,} \\ 1 - \cos\left(\pi\frac{n-1}{n}\right) & \text{if $n$ is odd.}\end{cases}
\end{align*}
\end{proof}
%%%%%%%%%%%%%%%%%%%%%%
% Corollary 3.2      %
%%%%%%%%%%%%%%%%%%%%%%
\begin{corollary}\label{cor:zero_spread}
The Laplacian spread satisfies
\[
\sp{\Gamma}=0
\]
if and only if $\Gamma$ is a k-imploding star or a regular tournament. 
\end{corollary}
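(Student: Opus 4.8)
The plan is to translate the spread condition into a purely geometric statement about the restricted numerical range and then appeal directly to the characterizations in Theorem~\ref{thm:rnr_char}. By definition, $\sp{\Gamma} = \beta(\Gamma) - \alpha(\Gamma)$ is the length of the projection of $W_{r}(L)$ onto the real axis. Hence (assuming $n\geq 2$, so that $W_{r}(L)\neq\emptyset$) the equation $\sp{\Gamma} = 0$ holds precisely when $\alpha(\Gamma) = \beta(\Gamma)$, that is, when every point of $W_{r}(L)$ shares the same real part. The key structural fact I would invoke is that, by Proposition~\ref{prop:basic-rnr}(i), $W_{r}(L) = W(Q^{*}LQ)$ is the numerical range of an $(n-1)\times(n-1)$ matrix, and is therefore a nonempty compact convex subset of the complex plane by the Toeplitz--Hausdorff theorem.

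First I would observe that a nonempty compact convex set of constant real part must lie on a single vertical line, and that a compact convex subset of a line is either a single point or a closed line segment. Thus $\sp{\Gamma} = 0$ forces $W_{r}(L)$ to be either a point or a vertical line segment, with no other possibility. Applying the known characterizations, Theorem~\ref{thm:rnr_char}(ii) identifies the point case exactly with $\Gamma$ being a $k$-imploding star, while Theorem~\ref{thm:rnr_char}(iii) identifies the vertical-segment case exactly with $\Gamma$ being a regular tournament. Together these establish the forward implication.

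For the converse I would simply note that if $\Gamma$ is a $k$-imploding star then $W_{r}(L)$ is a single point, and if $\Gamma$ is a regular tournament then $W_{r}(L)$ is a vertical line segment; in either case the real part of $W_{r}(L)$ has length zero, so $\sp{\Gamma} = 0$. The only nonroutine step is the geometric classification in the second paragraph, namely ruling out any compact convex set of zero width other than a point or a segment. This is precisely where compactness and convexity of the numerical range are essential; it is the reason I would route the argument through Proposition~\ref{prop:basic-rnr}(i) rather than attempting to reason directly from the extremal values $\alpha(\Gamma)$ and $\beta(\Gamma)$, which by themselves carry no information about the shape of $W_{r}(L)$.
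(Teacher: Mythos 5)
Your proposal is correct and follows essentially the same route as the paper: reduce $\sp{\Gamma}=0$ to $\alpha(\Gamma)=\beta(\Gamma)$, use Proposition~\ref{prop:basic-rnr}(i) and Toeplitz--Hausdorff to conclude $W_{r}(L)$ is convex and hence a point or vertical segment, then apply Theorem~\ref{thm:rnr_char}(ii)--(iii). The only cosmetic difference is that the paper also notes the symmetry of $W_{r}(L)$ about the real axis, which your convexity argument shows is not actually needed.
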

\begin{proof}
By Proposition~\ref{prop:basic-rnr}(i), it follows that all of the properties of the numerical range also hold for the restricted numerical range. In particular, by the Toeplitz-Hausdorff theorem~\cite{Hausdorff1919,Toeplitz1918}, $W_{r}(L)$ is convex. Furthermore, since $L$ has real entries, it follows that $W_{r}(L)$ is symmetric with respect to the real-axis.
Hence, $\alpha(\Gamma)=\beta(\Gamma)$ if and only if $W_{r}(L)$ is a single point or a vertical line segment. 
The result now follows from Theorem~\ref{thm:rnr_char}(ii)--(iii).
\end{proof}

Next, we show that the order of a polygonal digraph provides a sharp bound on its Laplacian spread.
First, we prove the following lemma, which will also be used in Section~\ref{sec:balanced}. 
Note that $\sigma(A)$ denotes the multiset of all eigenvalues of the matrix $A$. 
%%%%%%%%%%%%%%%%%%%%%%
% Lemma 3.3      	 %
%%%%%%%%%%%%%%%%%%%%%%
\begin{lemma}\label{lem:restricted-spectra}
Let $A$ be a $n\times n$ matrix with eigenvector $\mathbf{e}$ corresponding to the eigenvalue $0$. 
Then, for any restrictor matrix $Q$ of order $n$, the spectra satisfy
\[
\sigma(A) = \sigma(Q^{*}AQ)\cup\{0\}.
\]
\end{lemma}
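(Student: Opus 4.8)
The plan is to extend the vector $\mathbf{e}$ (normalized) to an orthonormal basis of $\mathbb{C}^n$ and exploit the resulting block-triangular structure of $A$ in that basis. Let $\mathbf{q}_0 = \mathbf{e}/\norm{\mathbf{e}}$, and let $Q$ be any restrictor matrix of order $n$, so that $\begin{bmatrix} \mathbf{q}_0 & Q \end{bmatrix}$ is a unitary matrix $U$. Since unitary similarity preserves the spectrum, I have $\sigma(A) = \sigma(U^{*}AU)$, so it suffices to compute the latter.

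The key step is to compute the block form of $U^{*}AU$ and observe that it is block upper triangular. Writing
\begin{equation*}
U^{*}AU = \begin{bmatrix} \mathbf{q}_0^{*} A \mathbf{q}_0 & \mathbf{q}_0^{*} A Q \\ Q^{*} A \mathbf{q}_0 & Q^{*} A Q \end{bmatrix},
\end{equation*}
I use the hypothesis $A\mathbf{e} = 0$, hence $A\mathbf{q}_0 = 0$, to kill the entire first column: both the $(1,1)$ entry $\mathbf{q}_0^{*} A \mathbf{q}_0$ and the lower-left block $Q^{*} A \mathbf{q}_0$ vanish. Thus
\begin{equation*}
U^{*}AU = \begin{bmatrix} 0 & \mathbf{q}_0^{*} A Q \\ 0 & Q^{*} A Q \end{bmatrix},
\end{equation*}
which is block upper triangular with diagonal blocks $0$ (a $1\times 1$ block) and $Q^{*}AQ$.

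From the block-triangular structure, the characteristic polynomial factors as $\det{U^{*}AU - \lambda I} = (-\lambda)\det{Q^{*}AQ - \lambda I}$, so the spectrum of $A$ is the multiset union of $\{0\}$ with $\sigma(Q^{*}AQ)$, giving exactly $\sigma(A) = \sigma(Q^{*}AQ)\cup\{0\}$ as multisets. I would state the multiset union carefully, since the claim is about multisets (the eigenvalue $0$ may also appear in $\sigma(Q^{*}AQ)$, and the identity still holds with that extra copy of $0$ from the triangular block contributing one additional zero). I do not expect any serious obstacle here; the only point requiring minor care is to emphasize that the result is independent of the choice of restrictor matrix $Q$, which follows because any two restrictor matrices differ by a unitary change of basis on $\mathbf{e}^{\perp}$, leaving $\sigma(Q^{*}AQ)$ unchanged. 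The argument never uses normality or symmetry of $A$, only the single eigenpair hypothesis, so it applies directly to the (generally non-symmetric) Laplacian matrices of digraphs.
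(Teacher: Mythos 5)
Your proof is correct and follows essentially the same route as the paper: extend the restrictor matrix to a unitary by adjoining the normalized all-ones vector, use $A\mathbf{e}=0$ to obtain a block-triangular form with diagonal blocks $Q^{*}AQ$ and $0$, and read off the spectrum. The only difference is cosmetic — you place $\mathbf{e}/\norm{\mathbf{e}}$ as the first column (giving block upper triangular) while the paper places it last (giving block lower triangular).
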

\begin{proof}
Let $\hat{Q}$ denote an $n\times  n$ unitary matrix whose first $(n-1)$ columns are equal to the columns of $Q$ and whose $n$th column is equal to the normalized all ones vector.
Then, since $A\mathbf{e}=0$, we have
\[
\hat{Q}^{*}A\hat{Q} = \begin{bmatrix}Q^{*}AQ & 0 \\ \frac{1}{\sqrt{n}}\mathbf{e}^{T}AQ & 0 \end{bmatrix}.
\]
\end{proof}
Note that the proof of Theorem~\ref{thm:polygonal_spread} will utilize the following notation: $I_{n}$ denotes the $n\times n$ identity matrix and $J_{m\times n}$ denotes the $m\times n$ all ones matrix. 
%%%%%%%%%%%%%%%%%%%%%%
% Theorem 3.4        %
%%%%%%%%%%%%%%%%%%%%%%
\begin{theorem}\label{thm:polygonal_spread}
If $\Gamma$ is a polygonal digraph of order $n$, then the Laplacian spread satisfies
\[
\sp{\Gamma} \leq n.
\]
Moreover, this bound is sharp for all polygonal digraphs of order $n\geq 4$.
\end{theorem}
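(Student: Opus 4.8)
The plan is to reduce the spread to the real parts of the Laplacian eigenvalues, which is legitimate precisely because $\Gamma$ is polygonal, and then to pin the two extremes using a Geršgorin estimate together with the complement reflection in Proposition~\ref{prop:basic-rnr}(v).

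First I would record that, by the definition of a polygonal digraph and Lemma~\ref{lem:restricted-spectra}, the restricted numerical range $W_{r}(L)$ is the convex hull of the nontrivial Laplacian eigenvalues $\sigma(Q^{*}LQ) = \sigma(L)\setminus\{0\}$. Consequently $\alpha(\Gamma)$ and $\beta(\Gamma)$, the minimum and maximum real parts of $W_{r}(L)$ by Proposition~\ref{prop:basic-rnr}(iv), are exactly the least and greatest real parts attained by these eigenvalues. Each row $i$ of $L=D-A$ has diagonal entry $d^{+}(i)$ and off-diagonal absolute row sum $d^{+}(i)$, so every Geršgorin disc is centered at $d^{+}(i)\geq 0$ with radius $d^{+}(i)$ and hence lies in the closed right half-plane. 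Therefore every eigenvalue of $L$ has nonnegative real part, and $\alpha(\Gamma)\geq 0$.

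To bound $\beta(\Gamma)$ from above I would pass to the complement. Using $A + \comp{A} = J_{n\times n} - I_{n}$ together with the fact that the out-degrees of vertex $i$ in $\Gamma$ and $\comp{\Gamma}$ sum to $n-1$, one obtains $L + \comp{L} = nI_{n} - J_{n\times n}$, so $Q^{*}\comp{L}Q = nI_{n-1} - Q^{*}LQ$ because $Q^{*}J_{n\times n}Q = 0$. Thus $\sigma(Q^{*}\comp{L}Q) = \{\,n-\mu : \mu\in\sigma(Q^{*}LQ)\,\}$, and taking convex hulls shows that $\comp{\Gamma}$ is again polygonal. Applying the previous paragraph to $\comp{\Gamma}$ gives $\alpha(\comp{\Gamma})\geq 0$; since Proposition~\ref{prop:basic-rnr}(v) yields $\alpha(\comp{\Gamma}) = n - \beta(\Gamma)$, I conclude $\beta(\Gamma)\leq n$ and hence $\sp{\Gamma} = \beta(\Gamma)-\alpha(\Gamma)\leq n$.

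For sharpness when $n\geq 4$, I would exhibit $\Gamma = K_{a} \djoin \comp{K_{b}}$ with $a,b\geq 2$ and $a+b=n$ (for instance $a=2$, $b=n-2$). As noted earlier, a directed join of two normal digraphs is restricted-normal, hence polygonal, and $K_{a}$ and $\comp{K_{b}}$ are both normal. Because the Laplacian of a directed join is block upper triangular, $\sigma(L) = (\sigma(L(K_{a})) + b)\cup\sigma(L(\comp{K_{b}})) = \{b,\,n^{(a-1)}\}\cup\{0^{(b)}\}$; deleting the trivial zero leaves nontrivial eigenvalues that include both $0$ (since $b\geq 2$) and $n$ (since $a\geq 2$). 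Hence $\alpha(\Gamma)=0$ and $\beta(\Gamma)=n$, so $\sp{\Gamma}=n$. I expect the only real subtlety to lie in this sharpness half: the upper bound is essentially forced once the reflection symmetry is observed, but a witness requires a digraph for which both $L$ and $\comp{L}$ carry $0$ as a nontrivial eigenvalue, that is, for which both $\Gamma$ and $\comp{\Gamma}$ fail to admit a spanning out-branching. This is impossible for undirected graphs, where a graph and its complement cannot both be disconnected, and that obstruction is exactly what caps the undirected spread at $n-1$; the one-way join edges of $K_{a}\djoin\comp{K_{b}}$ circumvent it, so the point to treat carefully is the verification that this digraph is genuinely polygonal rather than merely having eigenvalues with the right real parts.
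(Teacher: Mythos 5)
Your argument is correct and follows essentially the same route as the paper: reduce $W_{r}(L)$ to the convex hull of the nontrivial Laplacian eigenvalues via Lemma~\ref{lem:restricted-spectra}, get $\alpha(\Gamma)\geq 0$ from nonnegativity of the real parts (you use Ger\v{s}gorin where the paper cites the M-matrix property --- both valid), reflect through the complement via Proposition~\ref{prop:basic-rnr}(v) to get $\beta(\Gamma)\leq n$, and exhibit $K_{a}\djoin\comp{K_{b}}$ for sharpness (the paper takes $a=2$, $b=n-2$ and argues via the rank of $L$ rather than listing the spectrum). The only substantive differences are these two local substitutions, and your explicit attention to verifying polygonality of the extremal example is, if anything, slightly more careful than the paper's.
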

\begin{proof}
Let $\Gamma$ be a polygonal digraph of order $n$.
Then, $W_{r}(\Gamma)$ is equal to the convex hull of the eigenvalues of $Q^{*}LQ$, where $Q$ is any restrictor matrix of order $n$. 
By Lemma~\ref{lem:restricted-spectra}, it follows that $W_{r}(\Gamma)$ is equal to the convex hull of the eigenvalues of $L$, not including the zero eigenvalue of $L$ corresponding to the all ones eigenvector. 
Also, since every Laplacian matrix is an M-matrix, we know that the eigenvalues of $L$ have non-negative real part (see, for example, Chapter 6, Theorem 4.6 in~\cite{Berman1994}); hence, $\alpha(\Gamma)\geq 0$.
Moreover, by Proposition~\ref{prop:basic-rnr}(v), $\Gamma$ is polygonal if and only if $\comp{\Gamma}$ is polygonal, and $\beta(\Gamma) = n - \alpha(\comp{\Gamma})$.
Therefore, $\alpha(\Gamma) \geq 0$ and $\beta(\Gamma)\leq n$ for all polygonal digraphs $\Gamma$, which implies that $\sp{\Gamma}\leq n$.

Now, let $n\geq 4$, and define the restricted normal digraph
\[
\Gamma = K_{2}\djoin \comp{K_{n-2}}.
\]
The Laplacian matrix of $\Gamma$, possibly after re-ordering the vertices, can be written as
\[
L(\Gamma) = \begin{bmatrix}L(K_{2}) + (n-2)I_{2} & -J_{2\times(n-2)} \\ 0 & L(\comp{K_{n-2}}) \end{bmatrix}.
\]
Since $L(\Gamma)$ has rank $2$, it follows that its zero eigenvalue has geometric multiplicity $(n-2)$; hence, $\alpha(\Gamma)=0$ and $\beta(\Gamma)=n$, which implies that $\sp{\Gamma}=n$.
\end{proof}

It is worth noting that not all digraphs have a Laplacian spread bounded above by their order.
For instance, the digraph in Figure~\ref{fig:large-spread} has order $5$ and spread $5.035$, rounded to three decimal places.
By~\cite[Lemma 9]{Wu2005}, the Laplacian spread of any digraph $\Gamma$, with non-negative weights, satisfies
\begin{equation}\label{eq:wght-spread-bound}
\sp{\Gamma} \leq \frac{1}{2}\max_{v\in V}\left(3d^{+}(v)+d^{-}(v)\right) - \frac{1}{2}\min_{v\in V}\left(d^{+}(v)-d^{-}(v)\right).
\end{equation}
%%%%%%%%%%%%%%%%%%%%%%
% Figure 3           %
%%%%%%%%%%%%%%%%%%%%%%
\begin{figure}[ht]
\centering
\includegraphics[width=0.50\textwidth]{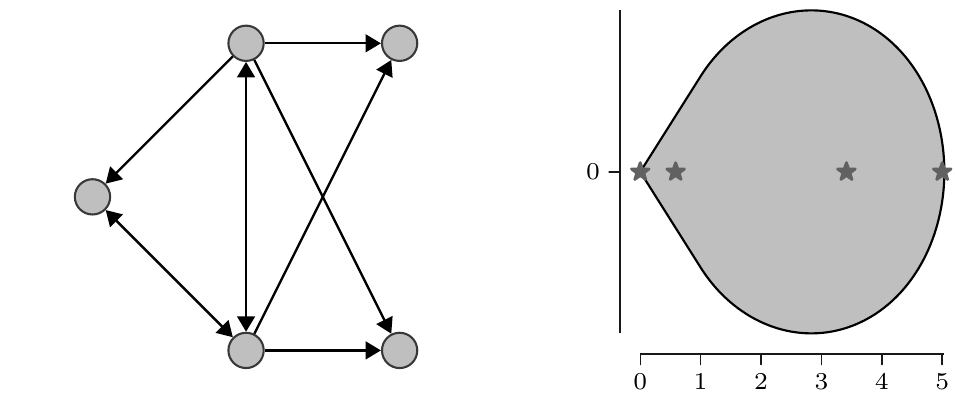}
\caption{A digraph with Laplacian spread larger than its order.}
\label{fig:large-spread}
\end{figure}
The bound in~\eqref{eq:wght-spread-bound} is equal to $8$ for the digraph in Figure~\ref{fig:large-spread}.
In general, for any unweighted digraph, the bound in~\eqref{eq:wght-spread-bound} has the following maximum value
\begin{equation}\label{eq:unwght-spread-bound}
\sp{\Gamma} \leq \frac{5}{2}(n-1). 
\end{equation}
We suspect that this bound is not sharp, and can be reduced to a linear function in $n$ with a slope of $1$. 

On a separate note, the digraph that attains the upper bound in Theorem~\ref{thm:polygonal_spread} is restricted-normal since it is defined as the directed join of two normal digraphs.
However, since normal digraphs are also balanced, Corollary~\ref{cor:balanced_spread} implies that the bound in Theorem~\ref{thm:polygonal_spread} is not sharp for normal digraphs. 
Yet, by Theorem~\ref{thm:pseudo-normal-spread}, this bound is sharp for pseudo-normal digraphs of order $n\geq 7$. 
Note that the proof of this result will utilize the following notation: $\mathbf{e}^{n}$ denotes the all ones vector of dimension $n$. 
Moreover, we let $T_{n}$ denote the regular tournament of order $n$ whose adjacency matrix has eigenvalues with maximum and minimum imaginary part equal to
\[
\pm\frac{1}{2}\cot\left(\frac{\pi}{2n}\right),
\]
respectively, which is the largest possible imaginary part of an eigenvalue associated with the adjacency matrix of a tournament digraph of order $n$ according to Pick's inequality~\cite{Pick1922}.
Such tournaments are known as \emph{regular Pick tournaments} and are unique up to graph isomorphism (see, for example Corollary 3.2 and Example 3.1 in~\cite{Gregory1993}).
Moreover, their adjacency matrix, possibly after reordering the vertices, can be written as
\[
A = P + P^{2} + \cdots + P^{\frac{n-1}{2}},
\]
where $n\geq 3$ is odd and $P$ is the permutation matrix corresponding to the permutation $(2,3,\ldots,n,1)$. 
Since the Laplacian $L=\frac{n-1}{2}I - A$ is a circulant matrix, it follows that
\[
\sigma(L) = \left\{\frac{n-1}{2}-\sum_{s=1}^{\frac{n-1}{2}}e^{\iu 2\pi j s/n}\colon~j=0,1,\ldots,n-1\right\}.
\]
%%%%%%%%%%%%%%%%%%%%%%
% Theorem 3.5        %
%%%%%%%%%%%%%%%%%%%%%%
\begin{theorem}\label{thm:pseudo-normal-spread}
Let $\Gamma = \left(K_{2}\djoin T_{p}\right)\djoin \comp{K_{t}}$, where $p\geq 3$ is an odd integer and $t$ is an integer between $2$ and $p$.
Then, $\Gamma$ is a pseudo-normal digraph with spread $\sp{\Gamma}=n$, where $n=2+p+t$ is the order of $\Gamma$.
\end{theorem}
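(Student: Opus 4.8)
The plan is to write $L$ in block form, diagonalize it as far as possible by separating the normalized all-ones vector of each vertex class from its orthogonal complement, and then reduce the entire statement to a single ellipse-in-polygon containment.

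Ordering the vertices as $K_2$, then $T_p$, then $\comp{K_t}$, the directed join sends every edge from an earlier class to a later one, so
\[
L=\begin{bmatrix} L(K_2)+(p+t)I_2 & -J_{2\times p} & -J_{2\times t}\\ 0 & L(T_p)+tI_p & -J_{p\times t}\\ 0 & 0 & 0\end{bmatrix}.
\]
The key structural observation is that each off-diagonal block is $J_{a\times b}=\mathbf e^{a}(\mathbf e^{b})^{T}$, a multiple of a rank-one all-ones matrix; hence, in the orthonormal basis splitting each class into its normalized all-ones vector $u_1,u_2,u_3$ and the complement within that class, the couplings touch only the $u_i$. First I would check that the three complements are $L$-invariant and normal: the $K_2$-complement is a lone eigenvector at $n$; the $T_p$-complement carries $L(T_p)+tI_p$ restricted to $\mathbf e^{\perp}$ (circulant, hence normal), a set of eigenvalues on the vertical line of real part $t+p/2$ whose extreme imaginary parts are $\pm h$ with $h=\tfrac12\cot\!\left(\tfrac{\pi}{2p}\right)$ by Pick's inequality; and the $\comp{K_t}$-complement contributes the eigenvalue $0$ with multiplicity $t-1$.

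On the remaining invariant subspace $\spn{u_1,u_2,u_3}$ the operator is an explicit $3\times3$ upper-triangular matrix $\tilde L$ with diagonal $(p+t,t,0)$ whose null vector is $\mathbf e$. Passing to $Q^{*}LQ$ deletes exactly this $\mathbf e$, and since $\spn{u_1,u_2,u_3}$ is both $L$-invariant and orthogonal to the three complements, $Q^{*}LQ$ splits orthogonally as $C\oplus D$: here $D$ is the normal (diagonal) part assembled above and $C$ is the $2\times2$ compression of $\tilde L$ to $\mathbf e^{\perp}\cap\spn{u_1,u_2,u_3}$, with eigenvalues $t$ and $p+t$. A short Frobenius bookkeeping computation—comparing $\|\tilde L\|_F^2$ with $\|C\|_F^2$ and the norm of the one deleted row—yields the off-diagonal modulus $|\delta|^2=2pt/n$, so that $W(C)$ is the elliptical disk with foci $t,\ p+t$, center $t+p/2$, and minor axis $\sqrt{2pt/n}$. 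In particular $C$, and therefore $Q^{*}LQ$, is not normal, while $L$ is visibly not normal; by the characterization of~\cite{Johnson1976} recalled above, proving that $\Gamma$ is pseudo-normal thus reduces to showing $W(C)\subseteq W(D)$. Granting this, Lemma~\ref{lem:restricted-spectra} and Proposition~\ref{prop:basic-rnr} give $W_{r}(L)=\conv{W(C)\cup W(D)}=W(D)$, the kite with vertices $0$, $n$, and $(t+p/2)\pm\iu h$; its real part then runs from $0$ (attained because $t\ge2$ leaves a zero eigenvalue in $\sigma(Q^{*}LQ)$) to $n$, whence $\sp{\Gamma}=n$.

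The main obstacle is the containment $W(C)\subseteq W(D)$. I would reduce it to two linear support-function inequalities, one per slanted edge of the kite, using that the maximum of a linear functional over an ellipse is read off from its center and semi-axes; by symmetry about the real axis only the two upper edges matter, and since $t\ge2$ places the center $t+p/2$ at least as far from the left vertex $0$ as from the right vertex $n$, the binding edge is the right one. After squaring, this collapses to the single inequality
\[
\frac{2pt}{n}\bigl(\kappa^{2}+(p+4)^{2}\bigr)\le 8\kappa^{2}(p+2),\qquad \kappa=\cot\!\left(\frac{\pi}{2p}\right).
\]
Its left-hand side increases in $t$, so it suffices to settle the worst case $t=p$, after which a one-variable check over odd integers $p\ge3$ finishes the argument; the inequality comes closest to equality at $p=t=3$, which is exactly why the hypothesis $t\le p$ is needed (for instance $p=3$, $t=4$ already violates it). Verifying this trigonometric inequality for all odd $p\ge3$ is the one genuinely delicate point; everything else is block linear algebra together with the geometry of the numerical range of a $2\times2$ matrix.
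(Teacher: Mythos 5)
Your proposal is correct, and its structural half coincides with the paper's proof: the same block form of $L$, the same splitting of each vertex class into its normalized all-ones vector and its orthogonal complement, the same resulting decomposition $Q^{*}LQ = D\oplus C$ (the paper calls your $C$ a lower-triangular $T$ with off-diagonal entry $-\sqrt{2pt/(2+p+t)}$, matching your $|\delta|^{2}=2pt/n$), the same kite $W(D)$ with vertices $0$, $n$, $(t+p/2)\pm\tfrac{\iu}{2}\cot(\pi/2p)$, and the same ellipse for $W(C)$ via the elliptical range theorem. Where you genuinely diverge is the containment $W(C)\subseteq W(D)$: the paper simply asserts that ``using a computer algebra system, such as Mathematica, one can verify that the boundaries of $W(D)$ and $W(T)$ don't intersect for $p\geq 3$ and $2\leq t\leq p$,'' whereas you reduce the containment to support-function inequalities over the four edges of the kite, use the reflection symmetry and the observation that $t\geq 2$ makes the right-hand slanted edge the binding one, and arrive at the single explicit inequality $\tfrac{2pt}{n}\bigl(\kappa^{2}+(p+4)^{2}\bigr)\leq 8\kappa^{2}(p+2)$ with $\kappa=\cot(\pi/2p)$, monotone in $t$ so that only $t=p$ need be checked. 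I verified this reduction and the resulting one-variable inequality $\kappa^{2}\geq p^{2}(p+4)^{2}/(7p^{2}+24p+16)$, which indeed holds for all odd $p\geq 3$ with near-equality at $p=t=3$; your remark that $p=3$, $t=4$ violates it correctly explains why $t\leq p$ is needed. Your route buys a fully analytic, human-checkable proof of the one step the paper outsources to a CAS; the only thing still owed is the actual verification of the final one-variable trigonometric inequality over odd $p\geq 3$ (routine, e.g.\ via a lower bound on $\cot(\pi/2p)$ for $p\geq 5$ plus the explicit case $p=3$), which you assert but do not carry out --- a smaller debt than the paper's, but a debt nonetheless.
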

\begin{proof}
Note that the Laplacian matrix of $\Gamma$, possibly after re-ordering the vertices, can be written as
\[
L(\Gamma) = \begin{bmatrix}L(K_{2}) + (p+t)I_{2} & -J_{2\times p} & -J_{2\times t} \\ 0 & L(T_{p}) + tI_{p} & -J_{p\times t} \\ 0 & 0 & L(\comp{K_{t}}) \end{bmatrix}.
\]
Since $K_{2}$ is a normal digraph, $L(K_{2})$ has orthonormal eigenvectors
\[
\mathbf{u}_{1},\frac{1}{\sqrt{2}}\mathbf{e}^{2}
\]
corresponding to the eigenvalues $\lambda_{1},\lambda_{2}$.
Similarly, $L(T_{p})$ has orthonormal eigenvectors
\[
\mathbf{v}_{1},\ldots,\mathbf{v}_{p-1},\frac{1}{\sqrt{p}}\mathbf{e}^{p}
\]
corresponding to the eigenvalues $\mu_{1},\ldots,\mu_{p}$, and $L(\comp{K_{t}})$ has orthonormal eigenvectors
\[
\mathbf{w}_{1},\ldots,\mathbf{w}_{t-1},\frac{1}{\sqrt{t}}\mathbf{e}^{t}
\]
corresponding to the eigenvalues $\nu_{1},\ldots,\nu_{t}$.

Now, we construct a restrictor matrix $Q=[\mathbf{q}_{1},\ldots,\mathbf{q}_{n-1}]$ of order $n$, where
\[
\mathbf{q}_{1}=\begin{bmatrix}\mathbf{u}_{1} \\ 0 \\ 0 \end{bmatrix},
\mathbf{q}_{2} = \begin{bmatrix}0 \\ \mathbf{v}_{1} \\ 0\end{bmatrix},
\ldots,
\mathbf{q}_{p} = \begin{bmatrix}0 \\ \mathbf{v}_{p-1} \\ 0\end{bmatrix},
\mathbf{q}_{p+1} = \begin{bmatrix}0 \\ 0 \\ \mathbf{w}_{1} \end{bmatrix},
\ldots,
\mathbf{q}_{p+t-1} = \begin{bmatrix}0 \\ 0 \\ \mathbf{w}_{t-1} \end{bmatrix},
\]
and
\[
\mathbf{q}_{p+t} = \frac{1}{\sqrt{2p^{2}+4p}}\begin{bmatrix} -p\mathbf{e}^{2} \\ 2\mathbf{e}^{p} \\ 0 \end{bmatrix},
\mathbf{q}_{p+t+1} = \frac{1}{\sqrt{2+p+\frac{1}{t}(4+4p+p^{2})}}\begin{bmatrix} \mathbf{e}^{2} \\ \mathbf{e}^{p} \\ -\frac{2+p}{t}\mathbf{e}^{t}\end{bmatrix}.
\]
Then, the restricted Laplacian can be written as
\[
Q^{*}L(\Gamma)Q = D\oplus T,
\]
which, by~\cite[Proposition 1.2.10]{Horn1991}, implies that the restricted numerical range satisfies
\[
W_{r}(\Gamma)=\conv{W(D)\cup W(T)}.
\]
Note that $D$ is a diagonal matrix with diagonal entries
\begin{equation}\label{eq:pns-eigs}
\lambda_{1}+p+t,\mu_{1}+t,\ldots,\mu_{p-1}+t,\nu_{1},\ldots,\nu_{t-1}
\end{equation}
and $T$ is a lower triangular matrix of the form
\[
T = \begin{bmatrix}p+t & 0 \\ -\sqrt{\frac{2pt}{2+p+t}} & t \end{bmatrix}.
\]

Since $D$ is diagonal, its numerical range is equal to the convex hull of its eigenvalues, that is, the values in~\eqref{eq:pns-eigs}, where $\lambda_{1}=2$, $\nu_{1},\ldots,\nu_{t-1}=0$, and $\mu_{1},\ldots,\mu_{p-1}$ lie on a vertical line with real part equal to $p/2$ and with maximum and minimum imaginary part equal to $\pm\frac{1}{2}\cot\left(\frac{\pi}{2p}\right)$, respectively.
Therefore, $W(D)$ is a quadrilateral with vertices
\begin{equation}\label{eq:quad}
\left(0,0\right),~\left(\frac{p}{2}+t,\frac{1}{2}\cot\left(\frac{\pi}{2p}\right)\right),~\left(2+p+t,0\right),~\left(\frac{p}{2}+t,-\frac{1}{2}\cot\left(\frac{\pi}{2p}\right)\right).
\end{equation}
Furthermore, by the elliptical range theorem~\cite{Li1996}, the boundary of $W(T)$ is an ellipse with center $(\frac{p}{2}+t,0)$, foci $t$ and $p+t$, and minor and major axis length of
\begin{equation}\label{eq:elli}
\sqrt{\frac{2pt}{2+p+t}}~\text{and}~\sqrt{p^{2}+\frac{2pt}{2+p+t}},
\end{equation}
respectively. 

Using a computer algebra system, such as Mathematica, one can verify that the boundaries of $W(D)$ and $W(T)$ don't intersect for $p\geq 3$ and $2\leq t\leq p$. 
Hence, since the center of the ellipse $W(T)$ is contained in $W(D)$, it follows that $W(T)\subset W(D)$.
Therefore, $\Gamma$ is pseudo-normal and has Laplacian spread
\[
\sp{\Gamma} = (2+p+t) - 0 = n.
\]
\end{proof}

%%%%%%%%%%%%%%%%%%%%%%
% Figure 4           %
%%%%%%%%%%%%%%%%%%%%%%
\begin{figure}[ht]
\centering
\includegraphics[width=0.75\textwidth]{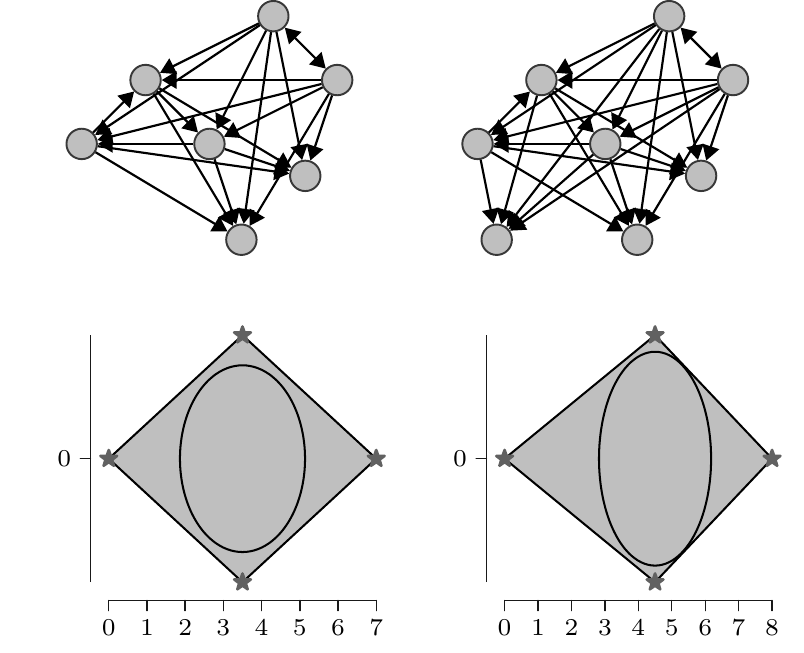}
\caption{Restricted numerical range for digraphs in Theorem~\ref{thm:pseudo-normal-spread} with $p=3$ and $2\leq t\leq 3$.}
\label{fig:pseudo-normal-spread}
\end{figure}

In Figure~\ref{fig:pseudo-normal-spread}, we illustrate Theorem~\ref{thm:pseudo-normal-spread} by displaying the digraphs $(K_{2}\djoin T_{3})\djoin \comp{K_{2}}$ (left) and $(K_{2}\djoin T_{3})\djoin \comp{K_{3}}$ (right) and their restricted numerical range.
Note that the quadrilateral with vertices given in~\eqref{eq:quad} and the ellipse with minor and major axis length given in~\eqref{eq:elli} are clearly displayed.
Moreover, as indicated in the proof of Theorem~\ref{thm:pseudo-normal-spread}, the ellipse is contained inside of the quadrilateral, which implies that both digraphs shown are pseudo-normal. 

%%%%%%%%%%%%%%%%%%%%%%%%%%%%%%%%%%%%%%%%%%%%%%%%%%%%%%
%           Balanced Digraphs
%%%%%%%%%%%%%%%%%%%%%%%%%%%%%%%%%%%%%%%%%%%%%%%%%%%%%%
\section{Balanced Digraphs}\label{sec:balanced}
In this section, we show that the Laplacian spread has a sharp bound of $(n-1)$ for all balanced digraphs of order $n$. 
To this end, note that Proposition~\ref{prop:basic-rnr}(v) implies that $\alpha(\Gamma)+\beta(\comp{\Gamma})=n$.
Hence, the statement 
\begin{equation}\label{eq:bal-lap-spread}
\sp{\Gamma} = \beta(\Gamma)-\alpha(\Gamma) \leq n-1
\end{equation}
is equivalent to the statement
\begin{equation}\label{eq:bal-lap-spread2}
\alpha(\Gamma) + \alpha(\comp{\Gamma}) \geq 1.
\end{equation}

Now, let $\Gamma=(V,E)$ be a \emph{balanced digraph}, that is, $d^{+}(i)=d^{-}(i)$ for all $i\in V$.
Also, let $L$ denote the Laplacian matrix of $\Gamma$.
Then, $\mathbf{e}$ is an eigenvector of $L$ and $L^{T}$ corresponding to the zero eigenvalue. 
Hence, $\mathbf{e}$ is an eigenvector of the Hermitian (symmetric) part of the Laplacian:
\[
H(L) = \frac{1}{2}\left(L+L^{T}\right),
\]
corresponding to the zero eigenvalue. 

By Proposition~\ref{prop:basic-rnr}(iv) and~\cite[Theorem 9]{Kippenhahn1951,Zachlin2008}, for any restrictor matrix $Q$ of order $n$, $\alpha(\Gamma)$ and $\beta(\Gamma)$ are equal to the minimum and maximum eigenvalues, respectively, of the Hermitian part of the restricted Laplacian:
\[
H(Q^{*}LQ) = \frac{1}{2}\left(Q^{*}LQ + Q^{*}L^{T}Q\right).
\]
Furthermore, since $H(L)\mathbf{e}=0$, Lemma~\ref{lem:restricted-spectra} implies that the spectrum of $H(Q^{*}LQ) = Q^{*}H(L)Q$ and the spectrum of $H(L)$ only differ by the multiplicity of the zero eigenvalue.
Since $H(L)$ is an M-matrix, its eigenvalues are non-negative and it follows that $\beta(\Gamma)$ is equal to the maximum eigenvalue of $H(L)$ and $\alpha(\Gamma)$ is equal to the second smallest eigenvalue of $H(L)$.
Moreover, since $\Gamma$ is balanced if and only if $\comp{\Gamma}$ is balanced, it follows that $\alpha(\comp{\Gamma})$ is equal to the second smallest eigenvalue of $H(\comp{L})$, where $\comp{L}$ denotes the Laplacian matrix of $\comp{\Gamma}$.

Now, we follow the development in~\cite{Afshari2019} to prove an equivalent statement to~\eqref{eq:bal-lap-spread2} for all balanced digraphs $\Gamma$.
We begin with the following lemma.
%%%%%%%%%%%%%%%%%%%%%%
% Lemma 4.1      	 %
%%%%%%%%%%%%%%%%%%%%%%
\begin{lemma}\label{lem:bal_quad_form}
Let $\Gamma$ be a balanced digraph of order $n\geq 2$ and let $\mathbf{x}\in\mathbb{R}^{n}$.
Then,
\[
\mathbf{x}^{T}L\mathbf{x} = \frac{1}{2}\sum_{(i,j)\in E}\left(x_{i}-x_{j}\right)^{2}.
\]
\end{lemma}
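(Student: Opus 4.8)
The plan is to establish the identity by a direct expansion, with the balanced hypothesis entering at exactly one point. Writing $L=D-A$, where $D=\diag{d^{+}(1),\ldots,d^{+}(n)}$ and $A=[a_{ij}]$ is the adjacency matrix, I would first record the left-hand side as
\[
\mathbf{x}^{T}L\mathbf{x}=\mathbf{x}^{T}D\mathbf{x}-\mathbf{x}^{T}A\mathbf{x}=\sum_{i\in V}d^{+}(i)\,x_{i}^{2}-\sum_{(i,j)\in E}x_{i}x_{j},
\]
using that $a_{ij}=1$ exactly when $(i,j)\in E$.

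Next I would expand the target sum and isolate the cross terms,
\[
\frac{1}{2}\sum_{(i,j)\in E}\left(x_{i}-x_{j}\right)^{2}=\frac{1}{2}\sum_{(i,j)\in E}x_{i}^{2}+\frac{1}{2}\sum_{(i,j)\in E}x_{j}^{2}-\sum_{(i,j)\in E}x_{i}x_{j},
\]
so that the bilinear term already agrees with the one above and it remains only to match the two squared-term sums. The key bookkeeping step is that reindexing $\sum_{(i,j)\in E}x_{i}^{2}$ by the tail $i$ counts $x_{i}^{2}$ once per out-edge, giving $\sum_{i\in V}d^{+}(i)\,x_{i}^{2}$, whereas reindexing $\sum_{(i,j)\in E}x_{j}^{2}$ by the head $j$ gives $\sum_{j\in V}d^{-}(j)\,x_{j}^{2}$.

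This is where, and the only place where, I would invoke that $\Gamma$ is balanced: since $d^{+}(i)=d^{-}(i)$ for every $i$, both squared-term sums equal $\sum_{i\in V}d^{+}(i)\,x_{i}^{2}$, so their half-sum is again $\sum_{i\in V}d^{+}(i)\,x_{i}^{2}$, and substituting back reproduces $\mathbf{x}^{T}L\mathbf{x}$ exactly. There is no real analytic obstacle here; the lemma is an identity provable by careful accounting of the edges. The one subtlety worth flagging is precisely the role of the hypothesis: for a general digraph the computation instead yields $\frac{1}{2}\sum_{i\in V}\bigl(d^{+}(i)+d^{-}(i)\bigr)x_{i}^{2}-\sum_{(i,j)\in E}x_{i}x_{j}$, which differs from $\mathbf{x}^{T}L\mathbf{x}$ by the correction $\frac{1}{2}\sum_{i\in V}\bigl(d^{-}(i)-d^{+}(i)\bigr)x_{i}^{2}$, and this term vanishes for all $\mathbf{x}\in\mathbb{R}^{n}$ exactly when $\Gamma$ is balanced. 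I would also note at the outset that, because $\mathbf{x}$ is real, no distinction between $L$ and its Hermitian part is needed in the argument.
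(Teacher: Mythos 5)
Your proof is correct, and it takes a genuinely different route from the paper's. The paper first invokes the combinatorial fact that the edge set of a balanced digraph is an edge-disjoint union of dicycles, and then verifies the identity by telescoping the quadratic form over each dicycle $C_{i}$ separately (each vertex of a dicycle occurs exactly once as a tail and once as a head, which is what makes $\sum_{(u,v)\in C_{i}}(x_{u}^{2}-x_{u}x_{v})=\frac{1}{2}\sum_{(u,v)\in C_{i}}(x_{u}-x_{v})^{2}$ work), before summing over the cycles. Your argument bypasses the cycle decomposition entirely: you expand both sides and observe that $\sum_{(i,j)\in E}x_{i}^{2}=\sum_{i\in V}d^{+}(i)x_{i}^{2}$ and $\sum_{(i,j)\in E}x_{j}^{2}=\sum_{j\in V}d^{-}(j)x_{j}^{2}$, so the two sides agree precisely because $d^{+}=d^{-}$ pointwise. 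This is more elementary, since it uses only the definition of balancedness rather than the (true but unproved-in-the-paper) Eulerian-type decomposition, and it has the added benefit of isolating the exact discrepancy $\frac{1}{2}\sum_{i\in V}\left(d^{-}(i)-d^{+}(i)\right)x_{i}^{2}$ for a general digraph, showing that the identity holds for all $\mathbf{x}\in\mathbb{R}^{n}$ if and only if $\Gamma$ is balanced. The paper's route, on the other hand, makes the structural reason visible (the Laplacian of a balanced digraph is a sum of dicycle Laplacians), which is a perspective that can be reused elsewhere; but as a proof of this lemma your accounting argument is complete and arguably cleaner.
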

\begin{proof}
In general, the quadratic form of any Laplacian matrix can be written as
\[
\mathbf{x}^{T}L\mathbf{x} = \sum_{(u,v)\in E}x_{u}^{2}-x_{u}x_{v}.
\]
Since $\Gamma$ is balanced, it follows that the edge set is an edge-disjoint union of dicycles. 
In particular, $E = \bigsqcup_{i=1}^{l}C_{i}$, where $C_{i} = \left\{(u_{1},u_{2}),(u_{2},u_{3}),\ldots,(u_{n_{i}},u_{1})\right\}$ is the edge set corresponding to a dicycle of length $n_{i}$. 
The quadratic form over $C_{i}$ can be written as
\begin{align*}
\sum_{(u,v)\in C_{i}}x_{u}^{2}-x_{u}x_{v} &= (x_{u_{1}}^{2}-x_{u_{1}}x_{u_{2}}) + (x_{u_{2}}^{2}-x_{u_{2}}x_{u_{3}}) + \cdots + (x_{u_{n_{i}}}^{2}-x_{u_{n_{i}}}x_{u_{1}}) \\
&= \frac{1}{2}\sum_{(u,v)\in C_{i}}(x_{u}-x_{v})^{2}.
\end{align*}
Hence, the quadratic form of the Laplacian of a balanced digraph can be written as
\begin{align*}
\mathbf{x}^{T}L\mathbf{x} &= \sum_{(u,v)\in E}x_{u}^{2}-x_{u}x_{v} \\
&= \sum_{i=1}^{l}\frac{1}{2}\sum_{(u,v)\in C_{i}}(x_{u}-x_{v})^{2} \\
&= \frac{1}{2}\sum_{(u,v)\in E}(x_{u}-x_{v})^{2}.
\end{align*}
\end{proof}

We are now ready to prove the main result of this section.
\begin{theorem}\label{thm:bal_spread_equivalence}
The following statements are equivalent:
\begin{enumerate}[(i)]
\item For any balanced digraph of order $n\geq 2$,
\[
\alpha(\Gamma) + \alpha(\comp{\Gamma}) \geq 1.
\]
\item For any two orthonormal vectors $\mathbf{x},\mathbf{y}\in\mathbb{R}^{n}$ with zero mean and $n\geq 2$,
\[
\norm{\nabla_{\mathbf{x}}-\nabla_{\mathbf{y}}}^{2} \geq 2,
\]
where $\nabla_{\mathbf{x}}\in\mathbb{R}^{\binom{n}{2}}$ is the vector whose $ij$ entry is equal to $\abs{x_{i}-x_{j}}$, for all $i<j$. 
\end{enumerate}
\end{theorem}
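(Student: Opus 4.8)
The plan is to prove the two implications separately, in each case reducing the spectral quantities $\alpha(\Gamma),\beta(\Gamma)$ to edge-sums of squared differences via Lemma~\ref{lem:bal_quad_form}, and then passing between the two vector statements by a $45^{\circ}$ rotation of the pair $(\mathbf{x},\mathbf{y})$. First I would record the spectral setup already assembled above: for a balanced digraph, $\alpha(\Gamma)$ and $\beta(\Gamma)$ are the second-smallest and largest eigenvalues of the symmetric matrix $H(L)$, and by Proposition~\ref{prop:basic-rnr}(v) we have $\alpha(\comp{\Gamma})=n-\beta(\Gamma)$. Choosing orthonormal eigenvectors $\mathbf{x},\mathbf{y}\perp\mathbf{e}$ of $H(L)$ for these two eigenvalues and applying Lemma~\ref{lem:bal_quad_form} (using $\mathbf{x}^{T}H(L)\mathbf{x}=\mathbf{x}^{T}L\mathbf{x}$ for real $\mathbf{x}$) gives
\[
\alpha(\Gamma)=\frac{1}{2}\sum_{(i,j)\in E}(x_i-x_j)^2,\qquad \alpha(\comp{\Gamma})=n-\frac{1}{2}\sum_{(i,j)\in E}(y_i-y_j)^2=\frac{1}{2}\sum_{(i,j)\in\comp{E}}(y_i-y_j)^2,
\]
where the last equality uses that the ordered pairs split as $E\sqcup\comp{E}=\{(i,j):i\neq j\}$ and that $\frac{1}{2}\sum_{i\neq j}(y_i-y_j)^2=n$ for a unit zero-mean $\mathbf{y}$.

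The combinatorial heart is the elementary identity $\bigl(\abs{a+b}-\abs{a-b}\bigr)^2=4\min(a^2,b^2)$ for real $a,b$. Applied with $a=x_i-x_j$, $b=y_i-y_j$ and the rotated pair $\mathbf{x}'=(\mathbf{x}+\mathbf{y})/\sqrt{2}$, $\mathbf{y}'=(\mathbf{x}-\mathbf{y})/\sqrt{2}$ (again orthonormal and zero-mean), it yields
\[
\sum_{i<j}\min\bigl((x_i-x_j)^2,(y_i-y_j)^2\bigr)=\frac{1}{2}\norm{\nabla_{\mathbf{x}'}-\nabla_{\mathbf{y}'}}^2 .
\]
This is the bridge between the ``minimum'' form natural to the Laplacian side and the $\nabla$-difference form of statement (ii).

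For (ii)$\Rightarrow$(i): with $\mathbf{x},\mathbf{y}$ the eigenvectors above, I would bound $\alpha(\Gamma)+\alpha(\comp{\Gamma})$ below by assigning to each ordered pair the smaller of its two contributions; since $E\sqcup\comp{E}$ is the set of all ordered pairs, this gives $\alpha(\Gamma)+\alpha(\comp{\Gamma})\geq\sum_{i<j}\min((x_i-x_j)^2,(y_i-y_j)^2)$, and the identity together with (ii) applied to $(\mathbf{x}',\mathbf{y}')$ forces the right-hand side to be $\geq 1$. For (i)$\Rightarrow$(ii): given orthonormal zero-mean $\mathbf{x},\mathbf{y}$, I would rotate to $\mathbf{x}',\mathbf{y}'$ and build the threshold digraph with edge set $\{(i,j):i\neq j,\ (x'_i-x'_j)^2\leq(y'_i-y'_j)^2\}$; this condition is symmetric in $i,j$, so the digraph is bidirectional and hence balanced, which is exactly where the balanced hypothesis enters. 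Testing $\alpha(\Gamma)$ with $\mathbf{x}'$ and $\alpha(\comp{\Gamma})$ with $\mathbf{y}'$ gives $\alpha(\Gamma)+\alpha(\comp{\Gamma})\leq\sum_{i<j}\min(\cdots)=\frac{1}{2}\norm{\nabla_{\mathbf{x}}-\nabla_{\mathbf{y}}}^2$, and (i) then yields the bound $2$.

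The hard part will be the bookkeeping that makes the constants line up exactly: the factor $\frac{1}{2}$ from Lemma~\ref{lem:bal_quad_form} must cancel against the ordered-versus-unordered pair counting so that both directions land on the same threshold $1$ (equivalently $2$ for $\norm{\nabla_{\mathbf{x}}-\nabla_{\mathbf{y}}}^2$), and one must verify the rotation identity carefully through its sign cases. A secondary technical point is the choice of orthonormal eigenvectors when $\lambda_2(H(L))$ or $\lambda_n(H(L))$ is degenerate, which is harmless since $H(L)$ is symmetric but should be noted.
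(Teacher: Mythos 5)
Your proposal is correct and follows essentially the same route as the paper's proof: the same eigenvector setup for $H(L)$ and $H(\comp{L})$, the quadratic-form identity of Lemma~\ref{lem:bal_quad_form}, the $45^{\circ}$ rotation to $(\mathbf{x}',\mathbf{y}')$, the identity $\bigl(\abs{a+b}-\abs{a-b}\bigr)^{2}=4\min(a^{2},b^{2})$, and the bidirectional threshold digraph (your edge condition $(x_i'-x_j')^{2}\leq(y_i'-y_j')^{2}$ is equivalent to the paper's $(x_i-x_j)(y_i-y_j)<0$ up to the harmless boundary case). The only difference is presentational --- you apply the min identity symmetrically in both directions rather than splitting by sign cases --- and the constants check out.
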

\begin{proof}
For $(ii)\Rightarrow(i)$, let $\mathbf{x},\mathbf{y}$ be real unit eigenvectors of $H(L)$ and $H(\comp{L})$, respectively, corresponding to $\alpha(\Gamma)$ and $\alpha(\comp{\Gamma})$.
Since $H(L)\mathbf{e}=0$ and $H(\comp{L})\mathbf{e}=0$, it follows that $\mathbf{x},\mathbf{y}\in\mathbf{e}^{\perp}$. 
Furthermore, since $H(\comp{L}) = \left(nI - \mathbf{e}\mathbf{e}^{T}\right) - H(L)$,
it follows that $\mathbf{y}$ is an eigenvector of $H(L)$ corresponding to $\beta(\Gamma)$, which implies that $\mathbf{x}\perp\mathbf{y}$.
Hence,
\[
\mathbf{x}' = \frac{1}{\sqrt{2}}\left(\mathbf{x}+\mathbf{y}\right)
~\text{and}~
\mathbf{y}' = \frac{1}{\sqrt{2}}\left(\mathbf{x}-\mathbf{y}\right)
\]
are orthonormal vectors with zero mean.
Now,
\begin{align*}
\alpha(\Gamma) + \alpha(\comp{\Gamma}) &= \mathbf{x}^{T}H(L)\mathbf{x} + \mathbf{y}^{T}H(\comp{L})\mathbf{y} \\
&= \mathbf{x}^{T}L\mathbf{x} + \mathbf{y}^{T}\comp{L}\mathbf{y} \\
&= \frac{1}{2}\sum_{(i,j)\in E}\left(x_{i}-x_{j}\right)^{2} + \frac{1}{2}\sum_{(i,j)\notin E}\left(y_{i}-y_{j}\right)^{2},
\end{align*}
where the last line follows from Lemma~\ref{lem:bal_quad_form}.
Since $2\min\{a,b\}=a+b-\abs{a-b}$ for any $a,b\in\mathbb{R}$, we can write the above equation as
\begin{align*}
\alpha(\Gamma) + \alpha(\comp{\Gamma}) &\geq \frac{1}{2}\sum_{i,j=1}^{n}\min\left\{\left(x_{i}-x_{j}\right)^{2},\left(y_{i}-y_{j}\right)^{2}\right\} \\
&= \frac{1}{4}\sum_{i,j=1}^{n}\left(\left(x_{i}-x_{j}\right)^{2} + \left(y_{i}-y_{j}\right)^{2} - \abs{\left(x_{i}-x_{j}\right)^{2} - \left(y_{i}-y_{j}\right)^{2}}\right) \\
&=\frac{1}{4}\sum_{i,j=1}^{n}\frac{1}{2}\left(\abs{\left(x_{i}-x_{j}\right)+\left(y_{i}-y_{j}\right)} - \abs{\left(x_{i}-x_{j}\right)-\left(y_{i}-y_{j}\right)}\right)^{2} \\
&=\frac{1}{4}\sum_{i,j=1}^{n}\left(\abs{\frac{x_{i}+y_{i}}{\sqrt{2}} - \frac{x_{j}+y_{j}}{\sqrt{2}}} - \abs{\frac{x_{i}-y_{i}}{\sqrt{2}} - \frac{x_{j}-y_{j}}{\sqrt{2}}}\right)^{2} \\
&= \frac{1}{4}\sum_{i,j=1}^{n}\left(\abs{x_{i}' - x_{j}'} - \abs{y_{i}' - y_{j}'}\right)^{2} \\
&= \frac{1}{4}\sum_{i<j}\left(\abs{x_{i}' - x_{j}'} - \abs{y_{i}' - y_{j}'}\right)^{2} + \frac{1}{4}\sum_{i>j}\left(\abs{x_{j}' - x_{i}'} - \abs{y_{j}' - y_{i}'}\right)^{2} \\
&=\frac{1}{2}\norm{\nabla_{\mathbf{x}}-\nabla_{\mathbf{y}}}^{2} \geq 1.
\end{align*}

Conversely, for $(i)\Rightarrow(ii)$, let $\mathbf{x},\mathbf{y}\in\mathbb{R}^{n}$ be two orthonormal vectors with zero mean. 
Then, define $\Gamma$ to be a digraph with vertex set $\{1,2,\ldots,n\}$, where
\[
(i,j)\in E~\Leftrightarrow~\left(x_{i}-x_{j}\right)\left(y_{i}-y_{j}\right) < 0.
\]
Note that $\Gamma$ is a bidirectional digraph since $(i,j)\in E$ if and only if $(j,i)\in E$. 
Now,
\begin{align*}
\norm{\nabla_{\mathbf{x}}-\nabla_{\mathbf{y}}}^{2} &= \frac{1}{2}\sum_{i<j}\left(\abs{x_{i}-x_{j}} - \abs{y_{i}-y_{j}}\right)^{2} + \frac{1}{2}\sum_{i>j}\left(\abs{x_{j}-x_{i}} - \abs{y_{j}-y_{i}}\right)^{2}\\
&= \frac{1}{2}\sum_{(i,j)\in E}\left(\left(x_{i}-x_{j}\right)+\left(y_{i}-y_{j}\right)\right)^{2} + \frac{1}{2}\sum_{(i,j)\notin E}\left(\left(x_{i}-x_{j}\right)-\left(y_{i}-y_{j}\right)\right)^{2} \\
&= \left(\mathbf{x}+\mathbf{y}\right)^{T}H(L)\left(\mathbf{x}+\mathbf{y}\right) + \left(\mathbf{x}-\mathbf{y}\right)^{T}H(\comp{L})\left(\mathbf{x}-\mathbf{y}\right) \\
&\geq \alpha(\Gamma)\norm{\mathbf{x}+\mathbf{y}}^{2} + \alpha(\comp{\Gamma})\norm{\mathbf{x}-\mathbf{y}}^{2} \\
&= 2\left(\alpha(\Gamma)+\alpha(\comp{\Gamma})\right) \geq 2,
\end{align*}
where the second to last line follows from the Rayleigh quotient theorem~\cite[Theorem 4.2.2]{Horn2013}.
\end{proof}

In~\cite[Theorem 2]{Afshari2019}, the authors show that statement (ii) in Theorem~\ref{thm:bal_spread_equivalence} is equivalent to~\eqref{eq:lap-spread-graphs2} for all simple undirected and unweighted graphs $G$.
Since the latter statement was proven in~\cite[Theorem 1]{Einollahzadeh2021}, it follows that both statements in Theorem~\ref{thm:bal_spread_equivalence} hold true. 
Therefore, we have the following corollary.
%%%%%%%%%%%%%%%%%%%%%%
% Corollary 4.3      %
%%%%%%%%%%%%%%%%%%%%%%
\begin{corollary}\label{cor:balanced_spread}
If $\Gamma$ is a balanced digraph of order $n$, then the Laplacian spread satisfies
\[
\sp{\Gamma}\leq n-1.
\]
Moreover, this bound is sharp for all balanced digraphs of order $n\geq 2$.
\end{corollary}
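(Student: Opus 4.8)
The plan is to obtain the upper bound as an immediate consequence of the machinery already assembled in this section, and then to verify sharpness by exhibiting an explicit extremal family. For the bound, I would first invoke the equivalence of \eqref{eq:bal-lap-spread} and \eqref{eq:bal-lap-spread2} established at the start of the section (via Proposition~\ref{prop:basic-rnr}(v) and the identity $\alpha(\Gamma)+\beta(\comp{\Gamma})=n$): proving $\sp{\Gamma}\leq n-1$ is the same as proving $\alpha(\Gamma)+\alpha(\comp{\Gamma})\geq 1$, which is precisely statement (i) of Theorem~\ref{thm:bal_spread_equivalence}. By that theorem, (i) is equivalent to statement (ii), the inequality $\norm{\nabla_{\mathbf{x}}-\nabla_{\mathbf{y}}}^{2}\geq 2$ for orthonormal zero-mean $\mathbf{x},\mathbf{y}$. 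Finally, I would cite that (ii) is equivalent to the undirected Laplacian spread inequality \eqref{eq:lap-spread-graphs2} by \cite[Theorem 2]{Afshari2019}, and that the latter was proven in \cite[Theorem 1]{Einollahzadeh2021}. Chaining these equivalences yields statement (i), and hence $\sp{\Gamma}\leq n-1$ for every balanced digraph of order $n$.

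For sharpness, I would produce a single balanced digraph of each order $n$ that meets the bound, and the natural candidate is the bidirectional star: take the undirected star $K_{1,n-1}$ and replace each edge by a pair of oppositely directed arcs. Any bidirectional digraph is balanced, since $d^{+}(i)=d^{-}(i)$ equals the underlying undirected degree, and its Laplacian $L=D-A$ coincides with the real symmetric Laplacian of the underlying graph. Consequently $W_{r}(L)$ lies on the real axis, and by Proposition~\ref{prop:basic-rnr}(iv) we have $\sp{\Gamma}=\lambda_{n}-\lambda_{2}$, where the $\lambda$'s are the undirected Laplacian eigenvalues. For $K_{1,n-1}$ these eigenvalues are $0,\,1,\,\ldots,\,1,\,n$, so $\lambda_{2}=1$ and $\lambda_{n}=n$, giving $\sp{\Gamma}=n-1$. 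Equivalently, one checks $\alpha(\Gamma)=1$ and $\alpha(\comp{\Gamma})=0$, the complement being $K_{n-1}$ together with an isolated vertex, which shows that the inequality $\alpha(\Gamma)+\alpha(\comp{\Gamma})\geq 1$ is attained with equality.

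The upper bound carries no genuine obstacle of its own: all of the analytic difficulty resides in Theorem~\ref{thm:bal_spread_equivalence} and in the cited resolution of the undirected conjecture, so the corollary is essentially a repackaging of those facts through the $\alpha(\Gamma)+\alpha(\comp{\Gamma})$ reformulation. The only points requiring care are bookkeeping ones. First, I must confirm that the extremal construction realizes equality rather than merely approaching it, which the explicit spectrum of $K_{1,n-1}$ settles. Second, I would verify the boundary orders carefully: for $n=3$ the star is $P_{3}$ with spectrum $0,1,3$ and $\sp{\Gamma}=2=n-1$, and the family delivers equality for all $n\geq 3$, so I would separately confirm the degenerate case $n=2$ to ensure the sharpness statement is stated over the correct range of $n$.
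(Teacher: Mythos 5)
Your derivation of the upper bound is the same as the paper's: reduce $\sp{\Gamma}\leq n-1$ to $\alpha(\Gamma)+\alpha(\comp{\Gamma})\geq 1$ via Proposition~\ref{prop:basic-rnr}(v), then chain Theorem~\ref{thm:bal_spread_equivalence} with \cite[Theorem 2]{Afshari2019} and \cite[Theorem 1]{Einollahzadeh2021}. For sharpness the paper takes $\Gamma=K_{1}\sqcup K_{n-1}$ and computes $W_{r}(\Gamma)=\conv{\{0,n-1\}}$ from the disjoint-union formula for the restricted numerical range; your bidirectional star $K_{1,n-1}$ is exactly the complement of that digraph, so by Proposition~\ref{prop:basic-rnr}(v) the two examples are interchangeable, and your verification through the undirected Laplacian spectrum $0,1,\ldots,1,n$ is a slightly more elementary route to the same conclusion. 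The one point you defer --- ``separately confirm the degenerate case $n=2$'' --- is where you should have pressed harder: the only balanced digraphs of order $2$ are $K_{2}$ and $\comp{K_{2}}$, whose restricted numerical ranges are the single points $\{2\}$ and $\{0\}$ respectively, so both have spread $0$ and the bound is \emph{not} attained at $n=2$. Your star degenerates there ($K_{1,1}=K_{2}$ has spread $0$, not $1$), and so does the paper's construction, since for $n=2$ the appeal to $W_{r}(K_{n-1})=\{n-1\}$ breaks down ($W_{r}(K_{1})=\emptyset$) and $K_{1}\sqcup K_{1}=\comp{K_{2}}$ also has spread $0$. So the sharpness assertion is genuinely valid only for $n\geq 3$; your instinct to isolate $n=2$ was correct, but a finished proof must resolve it (by restricting the range of $n$) rather than leave it as a promissory note.
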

\begin{proof}
The bound follows from Theorem~\ref{thm:bal_spread_equivalence} and~\cite[Theorem 1]{Einollahzadeh2021}.
Now, let $n\geq 2$ and define 
\[
\Gamma = K_{1}\sqcup K_{n-1}.
\]
Then, by~\cite[Theorem 3.1(i)]{Cameron2022_RNR}, the restricted numerical range of $\Gamma$ satisfies
\[
W_{r}(\Gamma) = \conv{W_{r}(K_{1})\cup W_{r}(K_{n-1})\cup\{0\}}
\]
Also, by~\cite[Theorem 3.2]{Cameron2021_RNR}, we know that $W_{r}(K_{1}) = \emptyset$ and $W_{r}(K_{n-1})=\{n-1\}$.
Hence, 
\[
W_{r}(\Gamma) = \conv{\{0,n-1\}},
\]
and it follows that $\sp{\Gamma}=n-1$.
\end{proof}
%%%%%%%%%%%%%%%%%%%%%%%%%%%%%%%%%%%%%%%%%%%%%%%%%%%%%%
%            Weighted Digraphs
%%%%%%%%%%%%%%%%%%%%%%%%%%%%%%%%%%%%%%%%%%%%%%%%%%%%%%
\section{Weighted Digraphs}\label{sec:wght-digraphs}
In this section, we investigate the Laplacian spread for weighted digraphs, that is, digraphs with weights between $0$ and $1$. 
To that end, define a simple weighted digraph by $\Gamma'=(V,E,w)$, where $w\colon V\times V\rightarrow [0,1]$ denotes the weight function and we use the convention that $w(i,j)=0$ if and only if $(i,j)\notin E$.
Also, define the complement digraph $\comp{\Gamma'}$ as the digraph with the same vertex set $V$ and weight function $\comp{w}\colon V\times V\rightarrow[0,1]$, where for all $(i,j)\in V\times V$ we have $\comp{w}(i,j)=1-w(i,j)$, if $i\neq j$, and $\comp{w}(i,j)=0$, if $i=j$.

It is important to note that throughout this section we will use $\Gamma'$ to denote a weighted digraph and $\Gamma$ to denote an unweighted digraph, that is, a digraph with weight function $w\colon V\times V\rightarrow\{0,1\}$.
Also, the Laplacian matrix, restricted numerical range, and algebraic connectivity are defined for weighted digraphs analogously to how they were defined for unweighted digraphs in Section~\ref{sec:rnr}. 
In fact, the basic properties stated in Proposition~\ref{prop:basic-rnr} still hold, though some of the characterizations stated in Theorem~\ref{thm:rnr_char} and the partial characterizations of polygonal digraphs in~\cite{Cameron2022_RNR} may no longer hold. 

Now, define $\mathcal{S}_{n}$ as the set of all weighted digraphs of order $n$.
Also, define the convex combination of $\Gamma'_{1}=\left(V,E_{1},w_{1}\right),\Gamma'_{2}=\left(V,E_{2},w_{2}\right)\in\mathcal{S}_{n}$ by
\[
\lambda_{1}\Gamma'_{1}+\lambda_{2}\Gamma'_{2} = \left(V,E_{1}\cup E_{2},\lambda_{1}w_{1}+\lambda_{2}w_{2}\right),
\]
where $\lambda_{1},\lambda_{2}\geq 0$ and $\lambda_{1}+\lambda_{2}=1$. 
Note that $\lambda_{1}w_{1}+\lambda_{2}w_{2}\colon V\times V\rightarrow[0,1]$,
which implies that $\mathcal{S}_{n}$ is a convex set.
In fact, $\mathcal{S}_{n}$ is a hypercube defined by the inequalities
\begin{equation}\label{eq:wght-digraph-polytope}
0 \leq w(i,j)\leq 1,
\end{equation}
for all $i,j\in V$ such that $i\neq j$. 
Note that the vertices of $\mathcal{S}_{n}$ are integral and correspond to the unweighted digraphs of order $n$. 
Therefore, we have the following result. 
%%%%%%%%%%%%%%%%%%%%%%
% Theorem 5.1    	 %
%%%%%%%%%%%%%%%%%%%%%%
\begin{theorem}\label{thm:digraph-conv-hull}
Every weighted digraph can be written as the convex combination of unweighted digraphs. 
\end{theorem}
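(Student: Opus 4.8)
The plan is to exploit the hypercube description of $\mathcal{S}_{n}$ established immediately above the statement. Identifying a weighted digraph $\Gamma'=(V,E,w)$ with the point $\bigl(w(i,j)\bigr)_{i\neq j}\in[0,1]^{m}$, where $m=n(n-1)$ is the number of ordered off-diagonal pairs, the set $\mathcal{S}_{n}$ is exactly the cube $[0,1]^{m}$, its vertices (the points with all coordinates in $\{0,1\}$) are precisely the unweighted digraphs of order $n$, and the paper's two-term combination $\lambda_{1}\Gamma'_{1}+\lambda_{2}\Gamma'_{2}$ is nothing but the coordinate-wise affine combination $\lambda_{1}w_{1}+\lambda_{2}w_{2}$ of weight vectors. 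Hence iterated combinations flatten to a single convex combination with multiplied coefficients, and the theorem reduces to the standard fact that every point of a cube is a convex combination of its vertices. The only real work is to record this in the paper's own language.

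I would give the argument by induction on the number $k$ of pairs $(i,j)$ with $0<w(i,j)<1$, i.e.\ the number of genuinely fractional weights. If $k=0$, then $w$ is $\{0,1\}$-valued and $\Gamma'$ is already unweighted. If $k>0$, fix a pair $(i_{0},j_{0})$ with $\lambda:=w(i_{0},j_{0})\in(0,1)$, and let $\Gamma'_{1}$ (respectively $\Gamma'_{0}$) be the weighted digraph agreeing with $\Gamma'$ on every coordinate except that the single weight on $(i_{0},j_{0})$ is reset to $1$ (respectively $0$). Then
\[
\Gamma' = \lambda\,\Gamma'_{1} + (1-\lambda)\,\Gamma'_{0},
\]
since the two digraphs reproduce $w(i_{0},j_{0})=\lambda\cdot 1+(1-\lambda)\cdot 0$ on the chosen coordinate and agree with $\Gamma'$ elsewhere. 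Both $\Gamma'_{1}$ and $\Gamma'_{0}$ have exactly $k-1$ fractional weights, so by the inductive hypothesis each is a finite convex combination of unweighted digraphs; substituting these into the display and collecting coefficients (nonnegative and summing to $\lambda+(1-\lambda)=1$) expresses $\Gamma'$ as a finite convex combination of unweighted digraphs. The process terminates because $m$ is finite.

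More explicitly, one can bypass induction with the product formula for the cube: assigning to each unweighted digraph $\Gamma_{b}$, indexed by its $\{0,1\}$-weight vector $b$, the coefficient
\[
c_{b} = \prod_{b(i,j)=1} w(i,j)\;\prod_{b(i,j)=0}\bigl(1-w(i,j)\bigr),
\]
one checks that the $c_{b}$ are nonnegative, that $\sum_{b}c_{b}=\prod_{i\neq j}\bigl(w(i,j)+(1-w(i,j))\bigr)=1$, and that $\sum_{b}c_{b}\,b(i,j)=w(i,j)$ for every coordinate by factoring that coordinate out of the sum. Either route works, with the second producing an explicit representation.

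There is essentially no hard obstacle here, since the structural content was already extracted in showing that $\mathcal{S}_{n}$ is an integral hypercube; what remains is bookkeeping. The points requiring care are to confirm that the two-term combination preserves the common vertex set $V$ and the no-loop convention $w(i,i)=0$, so the reductions never leave $\mathcal{S}_{n}$, and to make explicit that ``convex combination'' means the finite notion obtained by iterating the binary operation. If a bound on the number of terms is wanted, Carath\'eodory's theorem shows that at most $m+1=n(n-1)+1$ unweighted digraphs suffice.
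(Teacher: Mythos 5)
Your proposal is correct and takes essentially the same route as the paper, which states the theorem without a separate proof because it follows immediately from the preceding observation that $\mathcal{S}_{n}$ is the hypercube $[0,1]^{n(n-1)}$ whose vertices are exactly the unweighted digraphs. Your induction on the number of fractional weights (and the explicit product formula for the coefficients) simply spells out the standard fact that every point of a cube is a convex combination of its vertices, which is precisely the content the paper relies on.
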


We note that the result in Theorem~\ref{thm:digraph-conv-hull} appears elsewhere in the literature; for instance, the undirected case was alluded to in~\cite[Section 4.3]{Barrett2022}, where the authors argue that the Laplacian spread for undirected graphs, as stated in~\eqref{eq:lap-spread-graphs2}, also holds for weighted undirected graphs.
Moreover, we have an analogous result to that in Theorem~\ref{thm:digraph-conv-hull} for weighted balanced digraphs, which is stated and proven below. 
%%%%%%%%%%%%%%%%%%%%%%
% Theorem 5.2		%
%%%%%%%%%%%%%%%%%%%%%%
\begin{theorem}\label{thm:balanced-conv-hull}
Every weighted balanced digraph can be written as a convex combination of unweighted balanced digraphs. 
\end{theorem}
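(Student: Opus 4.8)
The plan is to realize the set of weighted balanced digraphs of order $n$ as a bounded polytope and to show that its extreme points are precisely the unweighted balanced digraphs. Writing a weight function $w$ as the vector of its values $w(i,j)$ over ordered pairs $i\neq j$, recall from the discussion preceding Theorem~\ref{thm:digraph-conv-hull} that $\mathcal{S}_{n}$ is the hypercube cut out by the inequalities in~\eqref{eq:wght-digraph-polytope}. The balanced condition, in its natural weighted form, is the system of linear equations
\[
\sum_{j\neq i} w(i,j) = \sum_{j\neq i} w(j,i)\qquad(i\in V),
\]
so the collection $\mathcal{B}_{n}\subseteq\mathcal{S}_{n}$ of weighted balanced digraphs is the intersection of the hypercube with a linear subspace; in particular, it is a compact convex polytope. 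By Minkowski's theorem it equals the convex hull of its extreme points, so the theorem reduces to showing that every extreme point of $\mathcal{B}_{n}$ is integral, that is, a $\{0,1\}$-valued balanced weight function.

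To establish integrality, I would argue by contradiction using a cycle-cancellation argument, noting that the balance equations are exactly flow-conservation (circulation) constraints on the complete digraph. Let $w$ be an extreme point and suppose the set $F=\{(i,j)\colon 0<w(i,j)<1\}$ of fractional arcs is nonempty. First, if the underlying undirected graph of $F$ contains a cycle, then one can form the signed circulation $z$ that assigns $+1$ to arcs traversed forward and $-1$ to those traversed backward along the cycle; this $z$ satisfies the homogeneous balance equations, and for sufficiently small $\varepsilon>0$ both $w\pm\varepsilon z$ lie in $\mathcal{B}_{n}$, since every arc of $F$ is strictly interior to $[0,1]$. Because $w=\tfrac12\big((w+\varepsilon z)+(w-\varepsilon z)\big)$ with the two points distinct, this contradicts extremality. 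Hence the underlying graph of $F$ must be a forest.

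To finish, observe that a nonempty forest has a leaf vertex $v$ incident to exactly one fractional arc, while every other arc at $v$ has weight $0$ or $1$. The balance equation at $v$ then equates two integers plus the single fractional weight appearing on exactly one side, forcing that weight to be an integer, which contradicts $0<w(i,j)<1$. Therefore $F$ is empty and $w$ is integral; being balanced and integral, it is an unweighted balanced digraph. I expect the main obstacle to be precisely this integrality step: intersecting the integral hypercube $\mathcal{S}_{n}$ with the balance hyperplanes could in principle create fractional vertices, and ruling this out is the crux of the argument. An alternative to the combinatorial reasoning above is to invoke total unimodularity of the node--arc incidence matrix of the complete digraph, from which integrality of $\mathcal{B}_{n}$ follows from the standard theory of integral polyhedra; I would favor presenting the cycle-cancellation version to keep the proof self-contained.
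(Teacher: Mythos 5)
Your proof is correct, and it reaches the conclusion by a genuinely different (more self-contained) route than the paper. The paper also realizes $\mathcal{B}_{n}$ as the polytope $\{\mathbf{w}\colon B\mathbf{w}=0,\ 0\leq\mathbf{w}\leq 1\}$, where $B$ is the arc-incidence matrix of $K_{n}$, but then finishes by citing total unimodularity of $B$ (via Hoffman's sufficient conditions) together with the Hoffman--Kruskal theorem that a TU constraint matrix with integral right-hand sides yields an integral polytope. Your cycle-cancellation argument is essentially the combinatorial content hiding inside that black box: the balance equations are circulation constraints, a fractional extreme point would admit a perturbation along a cycle of fractional arcs, and the leaf-of-a-forest parity argument rules out an acyclic fractional support. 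What your version buys is a proof with no external citations; what the paper's version buys is brevity and a pointer to the general theory. One small point to tighten: your ``underlying undirected graph of $F$'' must be read as a multigraph, so that a pair of antiparallel fractional arcs $(i,j)$ and $(j,i)$ counts as a $2$-cycle (handled by the cancellation step with $z(i,j)=z(j,i)=+1$). If you instead collapse them to a single undirected edge, a leaf of the resulting forest could be incident to two fractional arcs, and the balance equation at that leaf would only force $w(i,j)=w(j,i)$ rather than integrality. With the multigraph convention stated explicitly, the argument is complete.
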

\begin{proof}
Let $\mathcal{B}_{n}$ denote the convex set of all weighted balanced digraphs of order $n$ and note that $\mathcal{B}_{n}$ can be viewed as a polytope obtained from the hypercube of $\mathcal{S}_{n}$ by adding the following constraints to~\eqref{eq:wght-digraph-polytope}:
\begin{equation}\label{eq:wght_balanced_polytope}
\sum_{\substack{j=1 \\ j\neq i}}^{n}w(i,j) = \sum_{\substack{j=1 \\ j\neq i}}^{n}w(j,i),
\end{equation}
for all $i$ in $V$.

Let $B=[b_{ij}]$ denote the \emph{arc-incidence matrix} of $K_{n}$, which is defined as a $n\times n(n-1)$ matrix where the rows and columns are indexed by the vertices and edges of $K_{n}$, respectively.
Moreover, $b_{ij}=-1$ if edge $e_{j}$ leaves vertex $v_{i}$, $b_{ij}=1$ if edge $e_{j}$ enters vertex $v_{i}$, and $b_{ij}=0$ otherwise. 
It is well-known that $B$ is \emph{totally unimodular}, that is, every minor of $B$ is equal to $0$, $+1$, or $-1$; for example, this result follows immediately from the sufficient conditions for total unimodularity in~\cite[Theorem 3]{Hoffman1956}.
Moreover, by~\cite[Theorem 2]{Hoffman1956}, the following inequalities
\[
B\mathbf{w}=0,~0\leq\mathbf{w}\leq 1,
\]
where $\mathbf{w}\in\mathbb{R}^{n(n-1)}$, describe a polytope with integral vertices. 
The result follows since this polytope describes the convex set $\mathcal{B}_{n}$ where the vertices correspond to the unweighted balanced digraphs.
\end{proof}
Next, we show that the algebraic connectivity is a concave function of weighted digraphs.
To that end, note that, by Proposition~\ref{prop:basic-rnr}(v), the algebraic connectivity of a weighted digraph $\Gamma'$ can be written as
\begin{align*}
\alpha(\Gamma') &= n - \beta(\comp{\Gamma'}) \\
&= n - \max_{\substack{\mathbf{x}\perp\mathbf{e} \\ \norm{\mathbf{x}}=1}}\left(\mathbf{x}^{T}L(\comp{\Gamma'})\mathbf{x}\right). \\
\end{align*}
Since the max of a quadratic form over a convex set is a convex function, it follows that the algebraic connectivity is a concave function of weighted digraphs.
This observation combined with Theorem~\ref{thm:balanced-conv-hull} implies the following result.
%%%%%%%%%%%%%%%%%%%%%%
% Corollary 5.3		%
%%%%%%%%%%%%%%%%%%%%%%
\begin{corollary}\label{cor:wght-balanced-spread}
The Laplacian spread satisfies $\sp{\Gamma'}\leq(n-1)$ for all weighted balanced digraphs $\Gamma'$. 
\end{corollary}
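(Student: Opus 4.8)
The plan is to reduce the spread bound to the equivalent inequality $\alpha(\Gamma')+\alpha(\comp{\Gamma'})\geq 1$ and then exploit the concavity of the algebraic connectivity together with the decomposition furnished by Theorem~\ref{thm:balanced-conv-hull}. First I would observe that, exactly as in the unweighted case recorded in~\eqref{eq:bal-lap-spread}--\eqref{eq:bal-lap-spread2}, Proposition~\ref{prop:basic-rnr}(v) continues to hold for weighted digraphs, so $\alpha(\Gamma')+\beta(\comp{\Gamma'})=n$ and likewise $\beta(\Gamma')=n-\alpha(\comp{\Gamma'})$. Hence $\sp{\Gamma'}=\beta(\Gamma')-\alpha(\Gamma')\leq n-1$ is equivalent to $\alpha(\Gamma')+\alpha(\comp{\Gamma'})\geq 1$, and it suffices to establish this latter inequality for every weighted balanced digraph.

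Next I would invoke Theorem~\ref{thm:balanced-conv-hull} to write $\Gamma'=\sum_{k}\lambda_{k}\Gamma_{k}$ as a convex combination of unweighted balanced digraphs $\Gamma_{k}$, with $\lambda_{k}\geq 0$ and $\sum_{k}\lambda_{k}=1$. The key algebraic point is that complementation is an affine operation on weight functions: since $\comp{w}(i,j)=1-w(i,j)$ for $i\neq j$ and $\sum_{k}\lambda_{k}=1$, one checks that $\comp{\Gamma'}=\sum_{k}\lambda_{k}\comp{\Gamma_{k}}$, that is, the complement of the convex combination is the convex combination of the complements. Moreover, each $\comp{\Gamma_{k}}$ is again an unweighted balanced digraph, because a digraph is balanced if and only if its complement is.

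Then I would apply the concavity of $\alpha$ established immediately before the corollary (which follows from $\alpha(\Gamma')=n-\beta(\comp{\Gamma'})$ together with the convexity of the maximum of a quadratic form over a convex set) to both $\Gamma'$ and $\comp{\Gamma'}$, obtaining $\alpha(\Gamma')\geq\sum_{k}\lambda_{k}\alpha(\Gamma_{k})$ and $\alpha(\comp{\Gamma'})\geq\sum_{k}\lambda_{k}\alpha(\comp{\Gamma_{k}})$. Adding these and regrouping gives $\alpha(\Gamma')+\alpha(\comp{\Gamma'})\geq\sum_{k}\lambda_{k}\bigl(\alpha(\Gamma_{k})+\alpha(\comp{\Gamma_{k}})\bigr)$. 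By Corollary~\ref{cor:balanced_spread} (equivalently, statement~(i) of Theorem~\ref{thm:bal_spread_equivalence}) each summand satisfies $\alpha(\Gamma_{k})+\alpha(\comp{\Gamma_{k}})\geq 1$, so the right-hand side is at least $\sum_{k}\lambda_{k}=1$, which completes the argument.

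The main obstacle---indeed essentially the only nontrivial point---is verifying that complementation commutes with taking convex combinations and preserves the balanced, integer-vertex structure, so that the per-term bound from the unweighted balanced case is genuinely applicable; once that affine identity is in hand, the concavity estimate is routine. I would be slightly careful to confirm that the decomposition of Theorem~\ref{thm:balanced-conv-hull} really produces unweighted balanced digraphs (the integral vertices of the polytope $\mathcal{B}_{n}$), since that is precisely what licenses the term-by-term appeal to Corollary~\ref{cor:balanced_spread}.
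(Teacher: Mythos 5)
Your proposal is correct and follows essentially the same route as the paper: reduce to $\alpha(\Gamma')+\alpha(\comp{\Gamma'})\geq 1$, decompose via Theorem~\ref{thm:balanced-conv-hull}, and combine the concavity of $\alpha$ with the per-term bound from Corollary~\ref{cor:balanced_spread}. The only difference is that you explicitly verify that complementation commutes with convex combinations and preserves balancedness, a point the paper's proof leaves implicit.
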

\begin{proof}
Let $\Gamma'$ denote a weighted balanced digraph and note that $\sp{\Gamma'}\leq(n-1)$ is equivalent to the statement $\alpha(\Gamma')+\alpha(\comp{\Gamma'})\geq 1$.
By Theorem~\ref{thm:balanced-conv-hull}, we have
\[
\Gamma' = \lambda_{1}\Gamma_{1}+\cdots+\lambda_{m}\Gamma_{m},
\]
where $\lambda_{1},\ldots,\lambda_{m}\geq 0$, $\lambda_{1}+\cdots+\lambda_{m}=1$, and $\Gamma_{1},\ldots,\Gamma_{m}$ are unweighted balanced digraphs.
By Corollary~\ref{cor:balanced_spread}, we have $\alpha(\Gamma_{j})+\alpha(\comp{\Gamma_{j}})\geq 1$ for all $j\in\{1,\ldots,m\}$, and the result follows since the algebraic connectivity is a concave function. 
\end{proof}

It is worth noting that since every weighted undirected graph can be viewed as a weighted digraph with bidirectional edges, Corollary~\ref{cor:wght-balanced-spread} also implies that the Laplacian spread satisfies $\sp{G'}\leq(n-1)$ for all weighted undirected graphs $G'$. 

On a related note, the bound in Theorem~\ref{thm:polygonal_spread} also holds for weighted polygonal digraphs, though we are unaware of a convexity argument for this result. 
Rather, if $\Gamma'$ is a weighted digraph, then both $L(\Gamma')$ and $L(\comp{\Gamma'})$ are M-matrices, and Proposition~\ref{prop:basic-rnr}(v) implies that $\Gamma'$ is polygonal if and only if $\comp{\Gamma'}$ is polygonal, and $\beta(\comp{\Gamma'}) = n - \alpha(\Gamma')$.
Hence, if $\Gamma'$ is polygonal, then $\alpha(\Gamma')\geq 0$ and $\beta(\Gamma')\leq n$, and it follows that $\sp{\Gamma'}\leq n$. 

%%%%%%%%%%%%%%%%%%%%%%%%%%%%%%%%%%%%%%%%%%%%%%%%%%%%%%
%                    Summary and Open Conjectures
%%%%%%%%%%%%%%%%%%%%%%%%%%%%%%%%%%%%%%%%%%%%%%%%%%%%%%
\section{Summary and Open Conjectures}\label{sec:summary} 
In this article, we define the Laplacian spread of a digraph as the length of the real part of its restricted numerical range as defined in~\cite{Cameron2022_RNR,Cameron2021_RNR}.
The Laplacian spread values for several families of digraphs are shown in Corollaries~\ref{cor:cycle_spread} and~\ref{cor:zero_spread}, and a sharp upper bound on the Laplacian spread for all polygonal digraphs is proved in Theorems~\ref{thm:polygonal_spread} and~\ref{thm:pseudo-normal-spread}.
Moreover, in Corollary~\ref{cor:balanced_spread}, we prove that $\sp{\Gamma}\leq (n-1)$ is a sharp upper bound for all balanced digraphs of order $n\geq 2$.
In particular, Theorem~\ref{thm:bal_spread_equivalence} shows that the validity of this upper bound is equivalent to the statement in~\eqref{eq:lap-spread-graphs3}.
Since the latter statement is also equivalent to the statement in~\eqref{eq:lap-spread-graphs2}, which was proven in~\cite[Theorem 1]{Einollahzadeh2021}, the result in Corollary~\ref{cor:balanced_spread} follows. 
Finally, in Corollary~\ref{cor:wght-balanced-spread}, we prove that $\sp{\Gamma'}\leq(n-1)$ also holds for all balanced digraphs $\Gamma'$, with weights between $0$ and $1$. 
Specifically, Theorem~\ref{thm:balanced-conv-hull} shows that every weighted balanced digraph can be written as the convex combination of unweighted balanced digraphs.
Therefore, the concavity of the algebraic connectivity implies that the bound in Corollary~\ref{cor:balanced_spread} also holds for weighted balanced digraphs.

To conclude, we display $\left(\alpha(\Gamma),\alpha(\comp{\Gamma})\right)$ values for unweighted balanced and polygonal digraphs of order $6$ and $7$.
These values were computed in C++ using the Nauty~\cite{McKay2013} and Eigen~\cite{eigen} libraries.
The source code for these computations along with Python scripts for interacting with the data is available at~\url{https://github.com/trcameron/LaplacianSpreadDigraphs}.
We use this empirical data to form and state open conjectures regarding the families of digraphs that attain certain spread values.
Note that all conjectures are made for unweighted digraphs.
 
%%%%%%%%%%%%%%%%%%%%%%
% Figure 5           %
%%%%%%%%%%%%%%%%%%%%%%
\begin{figure}[ht]
\centering
\begin{tabular}{cc}
\includegraphics[width=0.45\textwidth]{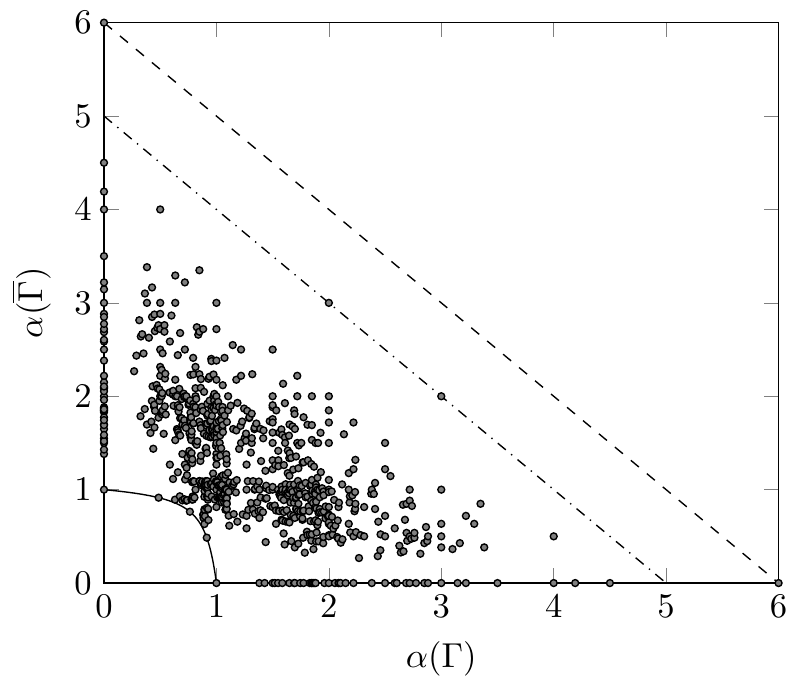}
&
\includegraphics[width=0.45\textwidth]{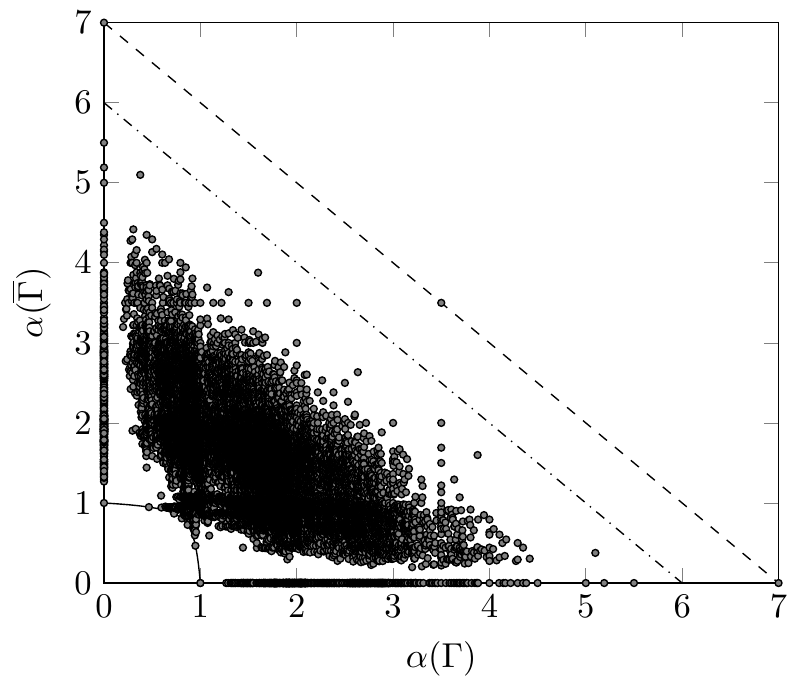}
\end{tabular}
\caption{$\left(\alpha(\Gamma),\alpha(\comp{\Gamma})\right)$ values for balanced digraphs of order $6$ (left) and $7$ (right).}
\label{fig:balanced_vals}
\end{figure}
Figure~\ref{fig:balanced_vals} displays $\left(\alpha(\Gamma),\alpha(\comp{\Gamma})\right)$ values for balanced digraphs of order $6$ (left) and $7$ (right). 
Note that the dashed line corresponds to the equation $y=n-x$; digraphs on this line have a Laplacian spread equal to zero.
Also, the dash-dotted line corresponds to the equation $y=n-1-x$; digraphs on this line have a Laplacian spread equal to one. 
Finally, the  solid curve corresponds to the implicit equation 
\[
xy(2-xy)=n(1-x)(1-y)(n-2-x-y),~0\leq x\leq 1,~0\leq y\leq 1,
\]
which algebraically defines the conjectured sharper bound on the Laplacian spread for unweighted graphs given in~\cite{Barrett2022}.

Corollary~\ref{cor:zero_spread} implies that the only balanced digraphs on the line $y=n-x$ are regular tournaments; hence, there is no balanced digraph on that line when $n$ is even. 
Moreover, we have the following conjectures.
%%%%%%%%%%%%%%%%%%%%%%
% Conjecture 6.1	 %
%%%%%%%%%%%%%%%%%%%%%%
\begin{conjecture}\label{con:bal_sp_gap}
There is no balanced digraph that satisfies $0<\sp{\Gamma}<1$. 
\end{conjecture}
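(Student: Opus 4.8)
The plan is to translate the conjecture into a purely spectral statement about a single integer matrix and then exploit integrality. For a balanced digraph $\Gamma$, the development preceding Lemma~\ref{lem:bal_quad_form} shows that $\alpha(\Gamma)$ and $\beta(\Gamma)$ are the second-smallest and largest eigenvalues of the Hermitian part $H(L) = \frac{1}{2}(L+L^{T})$. Writing $S = A + A^{T}$, a symmetric matrix with zero diagonal and off-diagonal entries in $\{0,1,2\}$, we have $2H(L) = 2D - S$; since $\Gamma$ is balanced, $2d^{+}(i) = \sum_{j}S_{ij}$, so $M := 2H(L)$ is exactly the Laplacian of the weighted undirected graph (equivalently, the multigraph with edge multiplicities in $\{0,1,2\}$) whose weight matrix is $S$. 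Thus $M$ is an integer positive-semidefinite matrix with $M\mathbf{e}=0$ and $\sp{\Gamma} = \frac{1}{2}\bigl(\lambda_{n}(M)-\lambda_{2}(M)\bigr)$. The conjecture is therefore equivalent to the statement that an integer graph-Laplacian $M$ with $M\mathbf{e}=0$ cannot satisfy $0 < \lambda_{n}(M)-\lambda_{2}(M) < 2$; that is, its nonzero spectrum is either a single repeated value or has range at least $2$.

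First I would dispose of the case in which $S$ is disconnected. Then $\lambda_{2}(M)=0$, and for any edge $\{u,v\}$ the vector $\mathbf{e}_{u}-\mathbf{e}_{v}\perp\mathbf{e}$ has Rayleigh quotient $\frac{1}{2}\bigl(M_{uu}+M_{vv}-2M_{uv}\bigr)\geq\frac{1}{2}(1+1+2)=2$, so $\lambda_{n}(M)\geq 2$ whenever $S$ has an edge, while if $S$ has no edge then $M=0$ and $\sp{\Gamma}=0$. I would also record the equality analysis for later use: $\lambda_{2}(M)=\lambda_{n}(M)$ forces $M=\lambda\bigl(I-\tfrac{1}{n}J\bigr)$, i.e.\ all off-diagonal weights equal, which by connectivity means $S=K_{n}$ with uniform weight $1$ or $2$ --- precisely the regular tournaments (with $n$ odd, as the balanced condition demands) and the complete digraph, in agreement with Corollary~\ref{cor:zero_spread}.

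The crux is the connected, non-uniform case: to show that if the nonzero eigenvalues of $M$ are not all equal, then $\lambda_{n}(M)-\lambda_{2}(M)\geq 2$. Integrality of $M$ is what should force the gap to jump over the interval $(0,2)$ rather than vary continuously, and the smallest order illustrates the mechanism. For $n=3$ there are only two nonzero eigenvalues $a,b$, and a direct computation gives
\[
(a-b)^{2} = 2\bigl[(m_{12}-m_{13})^{2} + (m_{13}-m_{23})^{2} + (m_{23}-m_{12})^{2}\bigr],
\]
which is $0$ when the weights are equal and otherwise an even integer at least $4$; hence $\lvert a-b\rvert\in\{0\}\cup[2,\infty)$. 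The difficulty for general $n$ is that $\lambda_{n}(M)-\lambda_{2}(M)$ is no longer a polynomial in the entries of $M$, so this exact identity is unavailable.

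To attack the general connected case I would try to produce, whenever the weights are non-uniform, two orthonormal vectors $\mathbf{p},\mathbf{q}\in\mathbf{e}^{\perp}$ whose Rayleigh quotients differ by at least $2$, since then $\lambda_{2}(M)\leq\mathbf{p}^{T}M\mathbf{p}$ and $\lambda_{n}(M)\geq\mathbf{q}^{T}M\mathbf{q}$ give the bound directly. A natural starting point is Cauchy interlacing against the $3\times 3$ principal submatrices supported on a triple of vertices carrying non-equal weights (such a triple must exist when the weights are non-uniform), combined with the $n=3$ identity above. The hard part will be that the diagonal contribution of the \emph{external} weighted degrees perturbs that identity and must be controlled, and more seriously that the empirical extremal configurations --- complete multipartite digraphs with parts of size two, whose symmetrization attains $\lambda_{n}(M)-\lambda_{2}(M)=2$ exactly --- are degree-regular, so the purely degree-based estimates that suffice in the irregular case are too weak. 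I expect this connected, near-uniform regular regime to be the main obstacle, where a genuinely arithmetic argument exploiting the integrality of the characteristic polynomial of $M$, rather than a variational estimate, will be needed.
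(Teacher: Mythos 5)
This statement is Conjecture~\ref{con:bal_sp_gap}: the paper offers no proof of it, only the empirical data of Figure~\ref{fig:balanced_vals} for orders $6$ and $7$. So there is nothing to compare your argument against, and what you would need here is a complete solution of an open problem. Your reduction is sound and consistent with the paper's Section~4 setup: for balanced $\Gamma$ the quantities $\alpha(\Gamma)$ and $\beta(\Gamma)$ are the second-smallest and largest eigenvalues of $H(L)$, so $\sp{\Gamma}=\frac{1}{2}\left(\lambda_{n}(M)-\lambda_{2}(M)\right)$ with $M=L+L^{T}=2D-(A+A^{T})$ an integer Laplacian with weights in $\{0,1,2\}$. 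Your disconnected case is correct (the test vector $\mathbf{e}_{u}-\mathbf{e}_{v}$ gives $\lambda_{n}(M)\geq 2$ while $\lambda_{2}(M)=0$), your equality analysis matches Corollary~\ref{cor:zero_spread}, and the $n=3$ identity checks out, including the key arithmetic point that a non-uniform integer weight triple forces at least two nonzero pairwise differences and hence $(a-b)^{2}\geq 4$. One inaccuracy: the conjecture is not \emph{equivalent} to the statement for all $\{0,1,2\}$-weighted Laplacians, since the balanced hypothesis forces the weight-$1$ edges of $A+A^{T}$ to form an even subgraph; you are proposing to prove a strictly stronger statement whose truth you have not verified, and the reduction may be discarding a hypothesis you need.

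The genuine gap is that the connected, non-uniform case for $n\geq 4$ --- which is the entire content of the conjecture --- is not proved, and your own sketch of it would fail as stated. Cauchy interlacing against a $3\times 3$ principal submatrix $B$ on a non-uniform triple gives $\lambda_{n}(M)-\lambda_{2}(M)\geq\lambda_{3}(B)-\lambda_{2}(B)$, and when the three external weighted degrees on the triple coincide this does reduce to your $n=3$ identity; but when they differ, the top two eigenvalues of $B$ can collide, and in exactly the extremal regular configurations the external degrees on a non-uniform triple are forced to be unequal in a way that cancels the internal non-uniformity, so the perturbation is not lower-order. Since your own equality cases show the bound $\lambda_{n}(M)-\lambda_{2}(M)\geq 2$ is attained, no slack is available to absorb this. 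A concrete partial result you could have salvaged: for bidirectional $\Gamma$ the conjecture reduces to the claim that no undirected graph $G$ has $0<\lambda_{n}(G)-\lambda_{2}(G)<1$, which follows from the classical bounds $\lambda_{n}(G)\geq\Delta(G)+1$ (for a graph with an edge) and $\lambda_{2}(G)\leq\kappa_{v}(G)\leq\delta(G)$ (for non-complete $G$): the gap is $0$, at least $1$ for connected regular non-complete graphs, and at least $2$ otherwise. The open difficulty is precisely the genuinely directed case, where weight-$1$ edges are present and these degree--connectivity bounds no longer deliver the required margin; your proposal correctly locates but does not close that case.
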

%%%%%%%%%%%%%%%%%%%%%%
% Conjecture 6.2	 %
%%%%%%%%%%%%%%%%%%%%%%
\begin{conjecture}\label{con:bal_sp_leq1}
If $\Gamma$ is a balanced digraph with order $n\geq 3$ and $\sp{\Gamma}\in\{0,1\}$, then $\Gamma$ is a regular digraph. 
In particular, if $\sp{\Gamma}=0$ then $\Gamma$ is a regular tournament and if $\sp{\Gamma}=1$ then $n\geq 4$ is even and $\Gamma$ or $\comp{\Gamma}$ is a $\frac{n-2}{2}$-regular digraph.
\end{conjecture}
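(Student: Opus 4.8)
The plan is to analyze everything through the symmetric part $H(L)=\tfrac12(L+L^{T})$, which for a balanced digraph is (as recorded in Section~\ref{sec:balanced}) the Laplacian of an undirected weighted graph with diagonal entries $d^{+}(i)$, so that $\alpha(\Gamma)$ and $\beta(\Gamma)$ are its second-smallest and largest eigenvalues. The first step handles the regularity assertion for both spread values at once. Writing $\Delta=\max_{i}d^{+}(i)$ and $\delta=\min_{i}d^{+}(i)$, the vectors $\mathbf{e}_{k}-\tfrac1n\mathbf{e}$ (with $d^{+}(k)$ extremal) are orthogonal to $\mathbf{e}$ and have Rayleigh quotient $\tfrac{n}{n-1}d^{+}(k)$ against $H(L)$, since $H(L)\mathbf{e}=0$; this gives $\alpha(\Gamma)\leq\tfrac{n}{n-1}\delta$ and $\beta(\Gamma)\geq\tfrac{n}{n-1}\Delta$, whence
\[
\sp{\Gamma}\geq\frac{n}{n-1}\left(\Delta-\delta\right).
\]
Because the out-degrees are integers, a non-regular balanced digraph has $\Delta-\delta\geq 1$ and therefore $\sp{\Gamma}\geq\tfrac{n}{n-1}>1$. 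Thus $\sp{\Gamma}\in\{0,1\}$ forces $\Delta=\delta$, i.e.\ $\Gamma$ is regular.

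Assume henceforth that $\Gamma$ is $d$-regular and write $H(L)=dI-B$, where $B\geq 0$ is symmetric with zero diagonal, constant row sums $d$, off-diagonal entries in $\{0,\tfrac12,1\}$, and $B\mathbf{e}=d\mathbf{e}$. For the spread-$0$ case, $\sp{\Gamma}=0$ means all eigenvalues of $B$ on $\mathbf{e}^{\perp}$ coincide, so $B=cI+\tfrac{d-c}{n}\mathbf{e}\mathbf{e}^{T}$; the zero diagonal then forces every off-diagonal entry to equal $\tfrac{d}{n-1}$, and matching this against $\{0,\tfrac12,1\}$ yields $d\in\{0,\tfrac{n-1}{2},n-1\}$, i.e.\ $\Gamma$ is the empty digraph, a regular tournament, or the complete digraph. (For $n\geq 3$ the degenerate endpoints $\comp{K_{n}}$ and $K_{n}$ are genuine balanced spread-$0$ digraphs, so the ``in particular'' clause should either list them or be restricted to digraphs that are neither; all three are regular, consistent with the first assertion.)

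The spread-$1$ case is the crux. A useful reduction is that $B=\tfrac12(A+A^{T})$ depends only on the symmetric \emph{pattern} of $\Gamma$ (weight $1$ on digons, $\tfrac12$ on single arcs) and not on the orientation of the single arcs, while among balanced orientations each $d^{+}(i)$ is itself determined by this pattern; hence $\alpha(\Gamma),\beta(\Gamma),\sp{\Gamma}$ are invariants of the underlying weighted graph. The task becomes: classify the regular patterns of weighted degree $d$ for which the extreme eigenvalues $\theta_{\min}$ and $\theta_{\max}$ of $B$ on $\mathbf{e}^{\perp}$ satisfy $\theta_{\max}-\theta_{\min}=1$. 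The extremal examples are the cocktail-party graphs $K_{(n/2)\times 2}$ taken as all single arcs, for which $B=\tfrac12\,\mathrm{Adj}$ has $\mathbf{e}^{\perp}$-spectrum $\{0,-1\}$ and $d=\tfrac{n-2}{2}$, together with their complements (a perfect matching of digons adjoined), for which $d=\tfrac{n}{2}$. The crux is to show that spread $1$ forces this $\mathbf{e}^{\perp}$-spectrum to be exactly $\{0,-1\}$, equivalently $\alpha(\Gamma)=d$ and $\beta(\Gamma)=d+1$.

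I expect this spectral confinement to be the main obstacle; granting it, the argument closes cleanly. From $(B|_{\mathbf{e}^{\perp}})(B|_{\mathbf{e}^{\perp}}+I)=0$ one obtains $B^{2}+B=\tfrac{d(d+1)}{n}\mathbf{e}\mathbf{e}^{T}$, and comparing diagonals yields the per-vertex digon and single-arc counts $p=\tfrac{d(2d+2-n)}{n}$ and $q=\tfrac{4d(n-d-1)}{n}$. Imposing $p\geq 0$ on both $\Gamma$ and $\comp{\Gamma}$ confines $d$ to the unit interval $[\tfrac{n-2}{2},\tfrac{n}{2}]$, and integrality of $p$ eliminates the odd case $d=\tfrac{n-1}{2}$ (there $p=\tfrac{n-1}{2n}\notin\mathbb{Z}$), leaving exactly $n$ even and $d\in\{\tfrac{n-2}{2},\tfrac{n}{2}\}$, i.e.\ $\Gamma$ or $\comp{\Gamma}$ is $\tfrac{n-2}{2}$-regular. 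To prove the confinement I would either sharpen the degree estimate of the first paragraph into a spread lower bound that is strict unless $B$ is a scaled cocktail-party adjacency, or transport the equality analysis of the undirected Laplacian-spread theorem~\cite{Einollahzadeh2021} through Theorem~\ref{thm:bal_spread_equivalence}, possibly in tandem with Conjecture~\ref{con:bal_sp_gap} to rule out competing near-extremal spectra; the classical identification of regular graphs whose second-largest adjacency eigenvalue equals $0$ as complete multipartite graphs with equal parts would then pin down the cocktail-party structure, with the weighted digon contributions handled by passing to the digon-free lower-degree representative via complementation.
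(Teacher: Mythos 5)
The statement you set out to prove is presented in the paper only as an open conjecture (Conjecture~\ref{con:bal_sp_leq1}); the paper gives no proof, so there is no argument of the authors' to compare yours against. That said, your first two steps are correct and constitute genuine partial progress. The Rayleigh-quotient computation is right: since $H(L)\mathbf{e}=0$, the vector $\mathbf{e}_{k}-\tfrac{1}{n}\mathbf{e}$ has quotient $\tfrac{n}{n-1}d^{+}(k)$, so $\sp{\Gamma}\geq\tfrac{n}{n-1}(\Delta-\delta)>1$ for every non-regular balanced digraph, which settles the regularity assertion. The spread-$0$ classification is also complete: a constant spectrum on $\mathbf{e}^{\perp}$ forces $B=cI+\tfrac{d-c}{n}\mathbf{e}\mathbf{e}^{T}$, the zero diagonal pins every off-diagonal entry to $\tfrac{d}{n-1}\in\{0,\tfrac12,1\}$, and you correctly observe that, read literally, the conjecture's ``in particular'' clause is falsified by $K_{n}$ and $\comp{K_{n}}$ (both balanced, spread $0$, not regular tournaments), so it must be amended to exclude or list them.

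The gap is exactly where you flag it, and unfortunately it is the entire content of the spread-$1$ clause: you do not prove that $\sp{\Gamma}=1$ forces the $\mathbf{e}^{\perp}$-spectrum of $B=\tfrac12(A+A^{T})$ to be exactly $\{0,-1\}$, as opposed to some other configuration inside a window of length $1$. Everything downstream --- the identity $B^{2}+B=\tfrac{d(d+1)}{n}\mathbf{e}\mathbf{e}^{T}$, the per-vertex counts $p$ and $q$, the confinement $d\in\{\tfrac{n-2}{2},\tfrac{n}{2}\}$ and the parity of $n$ --- is conditional on that confinement and collapses without it (in particular, the quadratic relation requires the spectrum to consist of exactly two values, which is itself part of what must be shown). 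The routes you sketch do not close this: one invokes Conjecture~\ref{con:bal_sp_gap}, which is itself open in the paper; and ``transporting the equality analysis'' of~\cite{Einollahzadeh2021} through Theorem~\ref{thm:bal_spread_equivalence} would characterize equality in $\alpha(\Gamma)+\alpha(\comp{\Gamma})\geq 1$, i.e.\ digraphs with $\sp{\Gamma}=n-1$, which is a different extremal problem from $\sp{\Gamma}=1$. So the proposal proves the first sentence of the conjecture and the $\sp{\Gamma}=0$ case, but the $\sp{\Gamma}=1$ case --- the heart of the statement --- remains open.
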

%%%%%%%%%%%%%%%%%%%%%%
% Conjecture 6.3	 %
%%%%%%%%%%%%%%%%%%%%%%
\begin{conjecture}\label{con:bal_sp_bound}
Let $\Gamma$ be a balanced digraph of order $n\geq 2$ and let $x=\alpha(\Gamma)$ and $y=\alpha(\comp{\Gamma})$.
If $x\leq 1$ and $y\leq 1$, then
\[
xy(2-xy)\geq n(1-x)(1-y)(n-2-x-y).
\]
\end{conjecture}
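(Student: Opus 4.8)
The plan is to reduce Conjecture~\ref{con:bal_sp_bound} to a statement about weighted undirected graphs, exploiting the fact that for a balanced digraph the quantities $x=\alpha(\Gamma)$ and $y=\alpha(\comp{\Gamma})$ depend only on the symmetric part of the Laplacian. Writing $L=D-A$ with $D$ the out-degree matrix, balancedness gives $D=D^{T}$ and hence $H(L)=D-\frac{1}{2}(A+A^{T})$. The matrix $S=\frac{1}{2}(A+A^{T})$ is symmetric with entries in $\{0,\frac{1}{2},1\}$, and its $i$th row sum equals $\frac{1}{2}(d^{+}(i)+d^{-}(i))=d^{+}(i)$, so $H(L)$ is precisely the Laplacian $L_{G'}$ of a weighted undirected graph $G'$ on the same vertices with edge weights $s_{ij}\in\{0,\frac{1}{2},1\}$. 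Since $\alpha(\Gamma)$ is the second-smallest eigenvalue of $H(L)$, we obtain $\alpha(\Gamma)=\alpha(G')$; and since $H(\comp{L})=(nI-\mathbf{e}\mathbf{e}^{T})-H(L)$ is the Laplacian of the weighted complement $\comp{G'}$ (edge weights $1-s_{ij}$), we likewise get $\alpha(\comp{\Gamma})=\alpha(\comp{G'})$. Thus the pair $(x,y)$ arising from any balanced digraph is realized by a weighted undirected graph with weights in $[0,1]$.

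With this reduction in hand, the inequality to be proved for $\Gamma$ is literally the sharper undirected bound conjectured in~\cite{Barrett2022}, but applied to the weighted graph $G'$. I would therefore attempt to prove
\[
xy(2-xy)\geq n(1-x)(1-y)(n-2-x-y)
\]
for every weighted undirected graph with edge weights in $[0,1]$, which contains the half-integer case we need. Following the pattern of Theorem~\ref{thm:bal_spread_equivalence} and~\cite{Afshari2019}, the natural route is to let $\mathbf{x},\mathbf{y}$ be orthonormal zero-mean eigenvectors of $H(L)$ realizing $\alpha(\Gamma)=x$ and $\beta(\Gamma)=n-y$, and to rewrite both sides in terms of the gradient vectors $\nabla_{\mathbf{x}},\nabla_{\mathbf{y}}$ together with the constraint that $\mathbf{x},\mathbf{y}$ are genuine eigenvectors rather than arbitrary test vectors. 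The refinement over the bound $x+y\geq 1$ should come from retaining the second-order information that is discarded in the final chain of inequalities in the proof of Theorem~\ref{thm:bal_spread_equivalence}.

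The hard part, and the reason this remains a conjecture, is twofold. First, the target inequality is a genuinely quartic, non-convex polynomial constraint in $(x,y)$, so the concavity argument that promoted the linear bound from unweighted to weighted balanced digraphs in Corollary~\ref{cor:wght-balanced-spread} is unavailable: knowing the inequality on the integral vertices of $\mathcal{B}_{n}$ does not transfer to the interior, and our reduction lands in the weighted half-integer regime rather than the unweighted one. Second, and more fundamentally, the corresponding undirected statement in~\cite{Barrett2022} is itself only conjectured, so unlike Corollary~\ref{cor:balanced_spread} there is no established theorem to invoke after the reduction. A complete proof would need to pin down the extremal configurations along the curve $xy(2-xy)=n(1-x)(1-y)(n-2-x-y)$ — which the empirical data and Conjectures~\ref{con:bal_sp_gap} and~\ref{con:bal_sp_leq1} suggest are highly structured regular digraphs — and control the joint behavior of the smallest and largest nonzero eigenvalues of $H(L)$, not merely their sum. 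I expect this equality-case analysis for the weighted undirected inequality to be the central obstacle, after which the balanced-digraph statement follows immediately from the reduction above.
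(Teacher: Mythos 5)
This statement is Conjecture~\ref{con:bal_sp_bound}: the paper offers no proof of it, only empirical support from the $n=6,7$ data in Figure~\ref{fig:balanced_vals}, so there is no argument of the authors' to compare yours against. Your reduction step is correct and worth having: for a balanced digraph, $H(L)=D-\tfrac{1}{2}(A+A^{T})$ is indeed the Laplacian of a weighted undirected graph with weights in $\{0,\tfrac12,1\}$ whose weighted degrees equal the out-degrees, and since $\alpha(\Gamma)$ and $\alpha(\comp{\Gamma})$ are the second-smallest eigenvalues of $H(L)$ and $H(\comp{L})=(nI-\mathbf{e}\mathbf{e}^{T})-H(L)$, every pair $(x,y)$ arising from a balanced digraph is realized by a weighted undirected graph with weights in $[0,1]$ and its weighted complement. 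This places the conjecture between two others: it is implied by the weighted undirected version of the bound in~\cite{Barrett2022} and it implies the unweighted undirected version (bidirectional digraphs are balanced).

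The gap is that neither end of that chain is a theorem. The inequality $xy(2-xy)\geq n(1-x)(1-y)(n-2-x-y)$ is only conjectured in~\cite{Barrett2022} even for unweighted undirected graphs, and your reduction lands in the strictly harder weighted (half-integer) regime, where, as you yourself observe, the convexity argument of Corollary~\ref{cor:wght-balanced-spread} cannot promote an unweighted result to a weighted one because the constraint is quartic and non-convex in $(x,y)$. The proposed route of ``retaining the second-order information discarded in Theorem~\ref{thm:bal_spread_equivalence}'' is a plausible direction but is not carried out: you do not produce an identity or inequality in $\nabla_{\mathbf{x}},\nabla_{\mathbf{y}}$ that yields the quartic bound, nor an equality-case analysis. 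So what you have is a sound reduction of one open conjecture to another open conjecture, together with an accurate diagnosis of why the remaining step is hard --- which is a reasonable state of affairs for a statement the paper itself leaves open, but it is not a proof.
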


%%%%%%%%%%%%%%%%%%%%%%
% Figure 6           %
%%%%%%%%%%%%%%%%%%%%%%
\begin{figure}[ht]
\centering
\begin{tabular}{cc}
\includegraphics[width=0.45\textwidth]{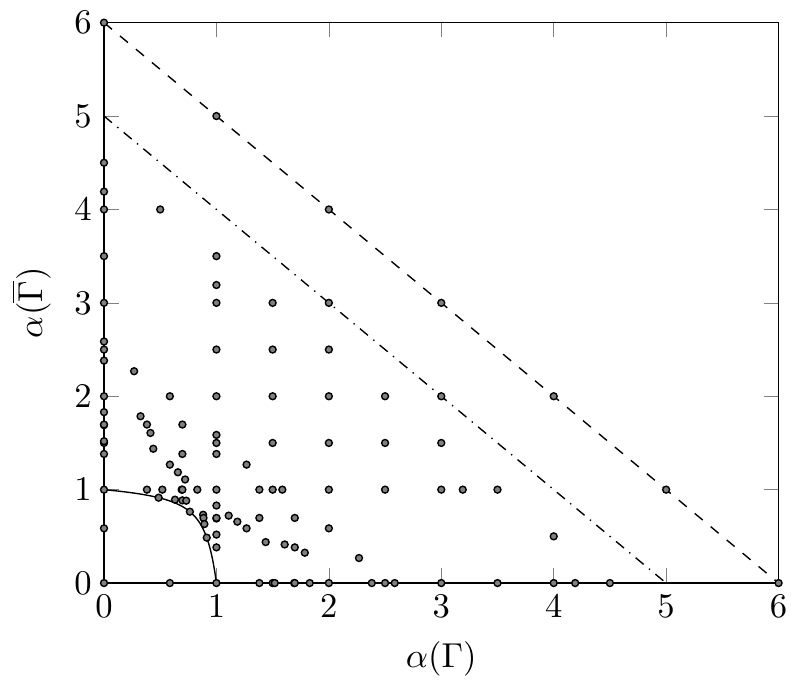}
&
\includegraphics[width=0.45\textwidth]{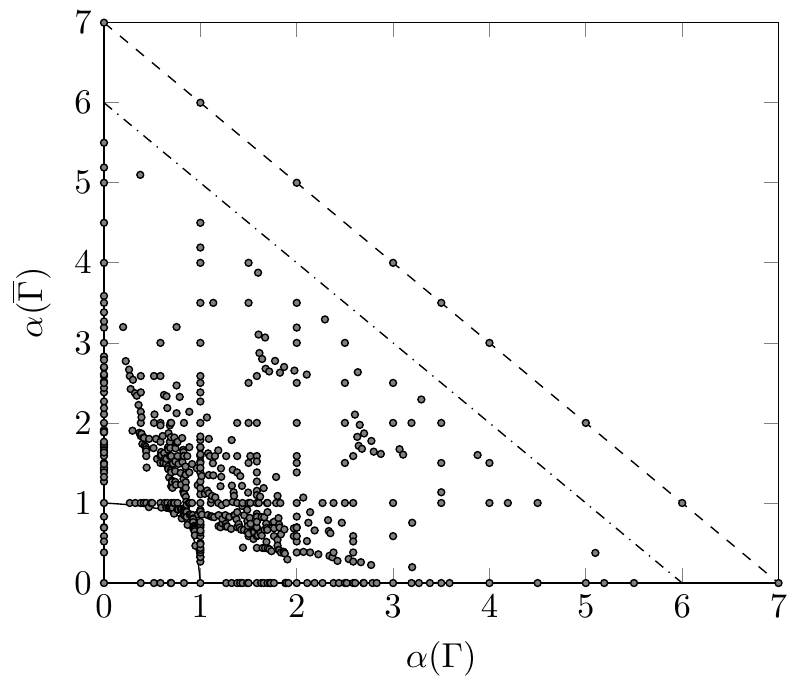}
\end{tabular}
\caption{$\left(\alpha(\Gamma),\alpha(\comp{\Gamma})\right)$ values for polygonal digraphs of order $6$ (left) and $7$ (right).}
\label{fig:polygonal_vals}
\end{figure}
Figure~\ref{fig:polygonal_vals} displays $\left(\alpha(\Gamma),\alpha(\comp{\Gamma})\right)$ values for polygonal digraphs of order $6$ (left) and $7$ (right). 
Note that the dashed and dash-dotted lines, as well as the solid curve, are the same as they were in Figure~\ref{fig:balanced_vals}. 

Corollary~\ref{cor:zero_spread} implies that the only digraphs on the line $y=n-x$ are regular tournaments or $k$-imploding stars.
Also, Theorems~\ref{thm:polygonal_spread} and~\ref{thm:pseudo-normal-spread} identify families of digraphs that lie at the origin, thus having a Laplacian spread equal to $n$.
Moreover, we have the following conjectures.
%%%%%%%%%%%%%%%%%%%%%%
% Conjecture 6.4	 %
%%%%%%%%%%%%%%%%%%%%%%
\begin{conjecture}\label{con:poly_sp_gap}
There is no polygonal digraph that satisfies $0<\sp{\Gamma}<1$. 
\end{conjecture}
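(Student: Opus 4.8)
The plan is to exploit that, for a polygonal digraph $\Gamma$, Lemma~\ref{lem:restricted-spectra} together with the definition of polygonality identifies $\alpha(\Gamma)$ and $\beta(\Gamma)$ with the minimum and maximum real parts of the nontrivial eigenvalues of $L$, so that $\sp{\Gamma}$ is the width of the vertical strip spanned by the spectrum. First I would dispose of the ``disconnected'' regime. Writing $H(L)=\frac12(L+L^{T})$ and testing the restricted form against $\mathbf{x}=\frac{1}{\sqrt2}(\mathbf{e}_{i}-\mathbf{e}_{j})$ for any arc between $i$ and $j$ gives $\mathbf{x}^{*}H(L)\mathbf{x}=\frac12\bigl(d^{+}(i)+d^{+}(j)+a_{ij}+a_{ji}\bigr)\ge 1$, whence $\beta(\Gamma)\ge 1$ whenever $\Gamma$ has an edge. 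Since $\alpha(\Gamma)\ge 0$ for every polygonal digraph (the M-matrix property used in Theorem~\ref{thm:polygonal_spread}), this already forces $\sp{\Gamma}=\beta(\Gamma)\ge 1$ in the case $\alpha(\Gamma)=0$, and the only edgeless digraph $\comp{K_{n}}$ has spread $0$. Thus it would remain to treat polygonal digraphs with $\alpha(\Gamma)>0$.

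For the positive-$\alpha$ regime I would stratify by the three classes of polygonal digraphs. For a restricted-normal digraph, which is a directed join $\Gamma_{1}\djoin\Gamma_{2}$ of normal digraphs, the block-triangular form of $L(\Gamma_{1}\djoin\Gamma_{2})$ shows that its spectrum is $\bigl(\sigma(L(\Gamma_{1}))+\lvert V_{2}\rvert\bigr)\cup\sigma(L(\Gamma_{2}))$, and a short bookkeeping of the trivial zero yields the monotonicity $\sp{\Gamma_{1}\djoin\Gamma_{2}}\ge\max\{\sp{\Gamma_{1}},\sp{\Gamma_{2}}\}$. Iterating this reduces the restricted-normal case to (i) the normal pieces and (ii) the base case in which every normal piece is a regular tournament, a complete, or an empty digraph. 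In base case (ii) the spectra are integers or half-integers, so $\sp{\Gamma}$ is pinned down by a direct computation (in the spirit of Corollaries~\ref{cor:cycle_spread} and~\ref{cor:zero_spread} and the proof of Theorem~\ref{thm:pseudo-normal-spread}) to be either $0$ (an imploding star or regular tournament) or at least $1$. For pseudo-normal digraphs I would use the decomposition $Q^{*}LQ\cong D\oplus C$ with $W(C)\subset W(D)$ recalled after Figure~\ref{fig:classes}, which again places $\alpha(\Gamma)$ and $\beta(\Gamma)$ at extreme real parts of the diagonal (eigenvalue) entries of $D$; for pseudo-normal families arising as directed joins, such as those of Theorem~\ref{thm:pseudo-normal-spread}, the same monotonicity applies, while the general pseudo-normal case would require separately controlling the block $D$.

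The main obstacle is the surviving core: a connected normal digraph (equivalently a connected balanced digraph with normal Laplacian) with $\alpha(\Gamma)>0$ that is not a regular tournament, for which one must prove $\beta(\Gamma)-\alpha(\Gamma)\ge 1$ outright. Here the integrality argument that trivializes the base cases collapses, since the relevant real parts need not be rational or even half-integral — already the even dicycles of Corollary~\ref{cor:cycle_spread} realize non-integer spreads such as $1+\cos(2\pi/n)$ — so the unit gap cannot come from arithmetic alone and must instead be extracted from the geometry of the symmetric Laplacian $H(L)$, whose extreme eigenvalues give $\beta(\Gamma)$ and $\alpha(\Gamma)$ in the balanced case. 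Concretely, I expect the hard step to be a quantitative separation $\lambda_{\max}(H(L))-\lambda_{2}(H(L))\ge 1$ for the weighted Laplacian $H(L)$ (edge weights in $\{\tfrac12,1\}$) of a connected balanced digraph that is not a regular tournament; this is exactly the content of Conjecture~\ref{con:bal_sp_gap} restricted to the normal case, and a proof would most plausibly adapt the variational machinery of Section~\ref{sec:balanced} — the identity of Lemma~\ref{lem:bal_quad_form} together with a min--max comparison between $H(L)$ and $H(\comp{L})$ — to convert a hypothetical spread in $(0,1)$ into a contradiction with the sharp estimate~\eqref{eq:lap-spread-graphs3}.
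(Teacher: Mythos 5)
This statement is one of the paper's open conjectures (Conjecture~\ref{con:poly_sp_gap}); the paper offers no proof of it, only empirical evidence from the order-$6$ and order-$7$ computations in Figure~\ref{fig:polygonal_vals}. Your proposal, by your own account, is not a proof either: it is a reduction strategy whose ``surviving core'' --- the assertion that a connected normal digraph with $\alpha(\Gamma)>0$ that is not a regular tournament satisfies $\beta(\Gamma)-\alpha(\Gamma)\geq 1$ --- is left open, and as you correctly observe it is essentially Conjecture~\ref{con:bal_sp_gap} restricted to normal digraphs, which the paper also leaves open. So the honest assessment is that neither you nor the paper has a proof, and your plan terminates in a statement at least as hard as the one it set out to establish.

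That said, the partial content is worth separating into what holds and what does not. The disposal of the $\alpha(\Gamma)=0$ regime is sound: for $\mathbf{x}=\tfrac{1}{\sqrt{2}}(\mathbf{e}_{i}-\mathbf{e}_{j})$ with $(i,j)\in E$ one indeed gets $\mathbf{x}^{T}L\mathbf{x}=\tfrac{1}{2}\left(d^{+}(i)+d^{+}(j)+a_{ij}+a_{ji}\right)\geq 1$, so $\beta(\Gamma)\geq 1$ for any digraph with an edge, and combined with $\alpha(\Gamma)\geq 0$ this settles the case $\alpha(\Gamma)=0$. The reduction steps, however, have gaps beyond the unresolved core. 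First, the monotonicity $\sp{\Gamma_{1}\djoin\Gamma_{2}}\geq\max\{\sp{\Gamma_{1}},\sp{\Gamma_{2}}\}$ is not ``short bookkeeping'': the restricted spectrum of the join is the union $\left(\sigma(L(\Gamma_{1}))+|V_{2}|\right)\cup\sigma(L(\Gamma_{2}))$ with a single zero deleted, whereas $\sp{\Gamma_{2}}$ is computed from $\sigma(L(\Gamma_{2}))$ with its own zero deleted; if $\Gamma_{2}$ is disconnected the surviving zeros change which eigenvalues are extremal, and you have not verified that the width cannot drop. Second, you implicitly use that every restricted-normal digraph is a directed join of normal digraphs; the paper only cites the converse direction (\cite[Proposition 3.6]{Cameron2022_RNR}), so the iteration is not justified. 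Third, for general pseudo-normal digraphs the block $D$ in $D\oplus C$ is an arbitrary diagonal matrix with no directed-join structure, so ``separately controlling the block $D$'' is the whole problem again, not a special case. If you want to make progress, the most promising concrete target your analysis isolates is the quantitative gap $\lambda_{\max}(H(L))-\lambda_{2}(H(L))\geq 1$ for connected balanced digraphs that are not regular tournaments, attacked via Lemma~\ref{lem:bal_quad_form} and the estimate~\eqref{eq:lap-spread-graphs3}; but until that is proved, the conjecture remains open.
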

%%%%%%%%%%%%%%%%%%%%%%
% Conjecture 6.5	 %
%%%%%%%%%%%%%%%%%%%%%%
\begin{conjecture}\label{con:poly_sp_leq1}
If $\Gamma$ be a polygonal digraph with order $n\geq 2$ and $\sp{\Gamma}\in\{0,1\}$, then $\Gamma$ is a regular digraph or a $k$-imploding star. 
In particular, if $\sp{\Gamma}=0$ then $\Gamma$ is a regular tournament or a $k$-imploding star and if $\sp{\Gamma}=1$ then $n\geq 4$ is even and $\Gamma$ or $\comp{\Gamma}$ is a $\frac{n-2}{2}$-regular digraph.
\end{conjecture}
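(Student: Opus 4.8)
The plan is to treat the two admissible spread values separately, since the case $\sp{\Gamma}=0$ is already settled and the case $\sp{\Gamma}=1$ carries essentially all of the difficulty. For $\sp{\Gamma}=0$, Corollary~\ref{cor:zero_spread} classifies \emph{every} digraph of spread zero, and in particular every polygonal one, as a regular tournament or a $k$-imploding star, which is exactly the asserted conclusion; here one need only note that a $k$-imploding star is a degenerate polygonal digraph, its restricted numerical range being the single point $k$ by Theorem~\ref{thm:rnr_char}(ii). So the first reduction is to assume $\sp{\Gamma}=1$ from here on.

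For $\sp{\Gamma}=1$ I would first fix the spectral picture. By Proposition~\ref{prop:basic-rnr}(v) the spread-one condition is equivalent to $\alpha(\Gamma)+\alpha(\comp{\Gamma})=n-1$, and since every Laplacian is an M-matrix we have $\alpha(\Gamma)\geq 0$ and $\beta(\Gamma)=\alpha(\Gamma)+1\leq n$. By Lemma~\ref{lem:restricted-spectra} together with polygonality, $W_{r}(\Gamma)$ is the convex hull of the $n-1$ nonprincipal eigenvalues of $L$, all of whose real parts lie in the width-one vertical strip $[\alpha(\Gamma),\alpha(\Gamma)+1]$, and this hull is symmetric about the real axis because $L$ is real. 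The digraphs that realize spread $1$ in the data, such as the directed $4$-cycle and the circulant digraph on $6$ vertices in which each vertex points to the next two vertices in cyclic order, are regular and normal, which is the structure the conjecture predicts in general.

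The crux, and the step I expect to be the \emph{main obstacle}, is to deduce regularity of $\Gamma$ (or of $\comp{\Gamma}$) from the spread-one constraint, since polygonality by itself is a weak hypothesis and there is no a priori reason a thin restricted numerical range should pin down the out-degree sequence. I would attack this along the trichotomy of polygonal digraphs recalled in Section~\ref{sec:rnr}. For normal digraphs the restricted Laplacian is normal, so the real parts of the eigenvalues coincide with the eigenvalues of the Hermitian part $H(L)$, and one tries to show that a width-one real-part strip combined with the integrality of the adjacency matrix forces a constant out-degree. For restricted-normal digraphs I would use the directed-join decomposition of~\cite[Proposition 3.6]{Cameron2022_RNR} to reduce to joins of normal factors and argue that spread exactly $1$ again forces each factor to be regular. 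For pseudo-normal digraphs I would invoke the $D\oplus C$ structure with $W(C)\subset W(D)$ from~\cite[Theorem 3]{Johnson1976} and argue that this containment is incompatible with a width-one strip unless $C$ degenerates.

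Finally, even granting regularity the parity and exact degree are not automatic: a $d$-regular digraph has $L=dI-A$, so $\sp{\Gamma}=1$ becomes the condition that the nonprincipal eigenvalues of the adjacency matrix $A$ have real parts spanning an interval of length exactly $1$. I would analyze this for regular digraphs, using that $\frac{1}{2}(A+A^{T})$ restricted to $\mathbf{e}^{\perp}$ governs those real parts, to show it can hold only when $n$ is even and $d\in\{\tfrac{n-2}{2},\tfrac{n}{2}\}$, the two values being exchanged by complementation and accounting for the ``$\Gamma$ or $\comp{\Gamma}$'' in the statement. The empirical data in Figure~\ref{fig:polygonal_vals}, where spread-one points occur only at even order, is consistent with this. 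Both the reduction to regularity and this final real-part analysis are delicate, and in particular closing the pseudo-normal case rigorously appears to require the most careful estimates, which is why the statement is recorded as a conjecture rather than a theorem.
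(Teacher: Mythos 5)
The statement you are addressing is recorded in the paper as an open conjecture (Conjecture~\ref{con:poly_sp_leq1}); the paper offers no proof of it, only empirical evidence from Figure~\ref{fig:polygonal_vals}. Your treatment of the case $\sp{\Gamma}=0$ is correct and complete: Corollary~\ref{cor:zero_spread} classifies \emph{all} digraphs of spread zero as regular tournaments or $k$-imploding stars, so the polygonal hypothesis is not even needed there, and this half of the conjecture is already a theorem. Your reformulation of the spread-one condition as $\alpha(\Gamma)+\alpha(\comp{\Gamma})=n-1$ via Proposition~\ref{prop:basic-rnr}(v) is also correct.

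However, for the case $\sp{\Gamma}=1$ --- which is the entire content of the open problem --- what you have written is a research plan, not a proof, and you acknowledge as much in your final sentence. The central step, deducing that $\Gamma$ or $\comp{\Gamma}$ is regular from the width-one constraint on the real parts of the nonprincipal Laplacian eigenvalues, is never carried out in any of the three classes (normal, restricted-normal, pseudo-normal): you describe which tools you would reach for (integrality of $A$, the directed-join decomposition of~\cite[Proposition 3.6]{Cameron2022_RNR}, the $D\oplus C$ structure from~\cite[Theorem 3]{Johnson1976}) but supply no argument that any of them closes the gap, and it is not evident that they do --- for instance, nothing you say rules out a non-regular normal digraph whose nonprincipal eigenvalues happen to have real parts confined to a unit interval. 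Likewise, the final claim that regularity plus $\sp{\Gamma}=1$ forces $n$ even and degree $\frac{n-2}{2}$ or $\frac{n}{2}$ is asserted, with consistency checks against examples, but not proved; the passage from ``the nonprincipal eigenvalues of $A$ have real parts spanning an interval of length exactly $1$'' to these specific degrees is exactly where a proof would have to do real work. So the proposal contains a genuine gap coinciding with the open part of the conjecture, and the correct portion (spread zero) is the portion that was already settled by Corollary~\ref{cor:zero_spread}.
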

%%%%%%%%%%%%%%%%%%%%%%
% Conjecture 6.6	 %
%%%%%%%%%%%%%%%%%%%%%%
\begin{conjecture}\label{con:poly_sp_bound}
Let $\Gamma$ be a balanced digraph of order $n\geq 2$ and let $x=\alpha(\Gamma)$ and $y=\alpha(\comp{\Gamma})$.
If $x,y\leq 1$ and
\[
xy(2-xy) < n(1-x)(1-y)(n-2-x-y),
\]
then $x=0$ or $y=0$.
\end{conjecture}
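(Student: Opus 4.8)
The plan is to exploit the same Hermitian-part reduction that drives Section~\ref{sec:balanced}. For a balanced digraph $\Gamma$ the symmetric matrix $H(L)=\frac{1}{2}(L+L^{T})$ is itself the Laplacian of a weighted undirected graph $G$ on the same vertex set, with edge weights $w_{ij}=\frac{1}{2}(a_{ij}+a_{ji})\in\{0,\tfrac{1}{2},1\}$; indeed, the balanced condition gives $\sum_{j\neq i}w_{ij}=\frac{1}{2}\bigl(d^{+}(i)+d^{-}(i)\bigr)=d^{+}(i)$, so $H(L)$ has the correct diagonal. Because $H(\comp{L})=(nI-\mathbf{e}\mathbf{e}^{T})-H(L)$ is the Laplacian of the complementary weighted graph $\comp{G}$ (with weights $1-w_{ij}$), the discussion in Section~\ref{sec:balanced} identifies $x=\alpha(\Gamma)=\alpha(G)$ and $y=\alpha(\comp{\Gamma})=\alpha(\comp{G})$. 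In particular, $x=0$ exactly when $G$ is disconnected and $y=0$ exactly when $\comp{G}$ is disconnected, so Conjecture~\ref{con:poly_sp_bound} is equivalent to the assertion that whenever both $G$ and $\comp{G}$ are connected (hence $x,y>0$) the quartic inequality $xy(2-xy)\geq n(1-x)(1-y)(n-2-x-y)$ holds. This recasts the statement as a weighted analogue, restricted to half-integer weights, of the sharpened spread bound conjectured in~\cite{Barrett2022}.

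First I would attempt to prove this weighted bound through the orthonormal-vector reformulation used in Theorem~\ref{thm:bal_spread_equivalence}. Taking unit eigenvectors $\mathbf{x},\mathbf{y}$ of $H(L)$ for the eigenvalues $\alpha(\Gamma)$ and $\beta(\Gamma)=n-\alpha(\comp{\Gamma})$, the vectors $\mathbf{x}'=\frac{1}{\sqrt{2}}(\mathbf{x}+\mathbf{y})$ and $\mathbf{y}'=\frac{1}{\sqrt{2}}(\mathbf{x}-\mathbf{y})$ are orthonormal with zero mean, and Lemma~\ref{lem:bal_quad_form} rewrites $\alpha(\Gamma)+\alpha(\comp{\Gamma})$ as the sum appearing in the proof of Theorem~\ref{thm:bal_spread_equivalence}. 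The crude step there replaces each summand by $\min\{(x_{i}-x_{j})^{2},(y_{i}-y_{j})^{2}\}$, which discards exactly the information needed for the sharper curve; the plan is instead to retain the cross term $\abs{(x_{i}-x_{j})^{2}-(y_{i}-y_{j})^{2}}$ and to bound the full expression using both the normalization constraints $\norm{\mathbf{x}'}=\norm{\mathbf{y}'}=1$, $\mathbf{x}'\perp\mathbf{y}'$, and the connectivity hypotheses. Connectivity of $G$ and $\comp{G}$ should force the extremal eigenvectors to be genuinely non-constant, and it is precisely the failure of connectivity---the degenerate cases $x=0$ or $y=0$---that ought to appear as the boundary where the refined inequality can break down.

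The hard part will be converting the linear estimate into the quartic one. The convexity argument of Corollary~\ref{cor:wght-balanced-spread}, which extended the linear bound $x+y\geq 1$ from unweighted to weighted balanced digraphs via Theorem~\ref{thm:balanced-conv-hull} and the concavity of $\alpha$, does not transfer here: the region $\{(x,y)\colon xy(2-xy)\geq n(1-x)(1-y)(n-2-x-y)\}$ is not convex, so a convex combination of unweighted balanced digraphs each satisfying the bound need not satisfy it. Consequently one cannot reduce the half-integer weighted case to the unweighted estimate of~\cite{Barrett2022} by averaging, and a direct spectral proof appears unavoidable. I therefore expect the decisive obstacle to be a sharp two-vector inequality---bounding $\sum_{i<j}\bigl(\abs{x'_{i}-x'_{j}}-\abs{y'_{i}-y'_{j}}\bigr)^{2}$ from below by the quartic expression in $\alpha(\Gamma)$ and $\alpha(\comp{\Gamma})$---together with a rigidity analysis showing that equality can propagate only to configurations in which $G$ or $\comp{G}$ disconnects, which is exactly the dichotomy $x=0$ or $y=0$ asserted by the conjecture.
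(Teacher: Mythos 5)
First, note that the statement you are trying to prove is presented in the paper as an \emph{open conjecture} (Conjecture~\ref{con:poly_sp_bound}), supported only by the empirical data in Figure~\ref{fig:polygonal_vals}; the paper contains no proof of it, so there is nothing to match your argument against. Your preliminary reduction is sound and consistent with Section~\ref{sec:balanced}: for a balanced $\Gamma$ the Hermitian part $H(L)$ is the Laplacian of a weighted undirected graph $G$ with weights in $\{0,\tfrac12,1\}$, $H(\comp{L})=(nI-\mathbf{e}\mathbf{e}^{T})-H(L)$ is the Laplacian of its weighted complement, and $x=\alpha(G)$, $y=\alpha(\comp{G})$ with $x=0$ (resp.\ $y=0$) exactly when the support of $G$ (resp.\ $\comp{G}$) is disconnected. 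Your observation that the convexity argument of Corollary~\ref{cor:wght-balanced-spread} cannot be reused, because the region cut out by the quartic inequality is not convex, is also a genuinely useful remark. One small inaccuracy: your restatement ``whenever $G$ and $\comp{G}$ are connected the quartic inequality holds'' drops the hypothesis $x,y\leq 1$; the correct contrapositive allows the alternative $x>1$ or $y>1$, so your reformulation is strictly stronger than the conjecture.

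The genuine gap is that the proposal stops exactly where the mathematics begins. The entire content of the conjecture is the sharp two-vector inequality you defer to --- bounding $\sum_{i<j}\bigl(\abs{x'_{i}-x'_{j}}-\abs{y'_{i}-y'_{j}}\bigr)^{2}$ below by the quartic expression in $x$ and $y$ together with the rigidity analysis of the equality cases --- and you explicitly label this ``the decisive obstacle'' without supplying any argument for it. Moreover, you cannot bootstrap from the unweighted case: the sharpened spread bound of~\cite{Barrett2022} that defines the solid curve in Figures~\ref{fig:balanced_vals} and~\ref{fig:polygonal_vals} is itself only a conjecture for undirected graphs, so even the base case of your intended reduction is unproven. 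As written, the proposal is a plausible research plan (the Hermitian-part reduction and the failure of the averaging argument are worth recording), but it is not a proof, and no step of it establishes the inequality asserted in Conjecture~\ref{con:poly_sp_bound}.
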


Finally, we formally state the conjecture made in Section~\ref{sec:spread} regarding the Laplacian spread of digraphs.
Note that any bound that is proven for the Laplacian spread of digraphs will hold more generally for weighted digraphs by Theorem~\ref{thm:digraph-conv-hull} and the concavity of the algebraic connectivity.
%%%%%%%%%%%%%%%%%%%%%%
% Conjecture 6.7	 %
%%%%%%%%%%%%%%%%%%%%%%
\begin{conjecture}\label{con:unwght-spread-bound}
The bound on the Laplacian spread of digraphs given in~\eqref{eq:unwght-spread-bound} can be reduced to a linear function in $n$ with a slope of $1$. 
\end{conjecture}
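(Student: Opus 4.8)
The plan is to reformulate the target bound as a statement about the symmetric (Hermitian) part of the Laplacian. Writing $H(L)=\frac12(L+L^{T})$, Proposition~\ref{prop:basic-rnr}(iv) identifies $\beta(\Gamma)$ and $\alpha(\Gamma)$ with the largest and smallest eigenvalues of the restricted Hermitian part $Q^{*}H(L)Q$, so $\sp{\Gamma}$ is exactly the eigenvalue spread of that $(n-1)\times(n-1)$ symmetric matrix. By Proposition~\ref{prop:basic-rnr}(v), a bound of the form $\sp{\Gamma}\le n+c$ is equivalent to $\alpha(\Gamma)+\alpha(\comp{\Gamma})\ge -c$; the balanced case of Corollary~\ref{cor:balanced_spread} is the special value $c=-1$, and the whole difficulty is to show that dropping the balance hypothesis degrades this lower bound by at most a constant.

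First I would decompose $H(L)=L_{\mathrm{sym}}+R$, where $L_{\mathrm{sym}}$ is the Laplacian of the symmetrized undirected graph with edge weights $\frac12(a_{ij}+a_{ji})\in[0,1]$ and $R=\frac12\diag{d^{+}-d^{-}}$ is the traceless imbalance diagonal. Since $L_{\mathrm{sym}}$ is the Laplacian of a weighted undirected graph, hence of a weighted balanced digraph with weights in $[0,1]$, Corollary~\ref{cor:wght-balanced-spread} bounds its restricted spread by $n-1$; meanwhile $R$ has entries in $\left[-\frac12(n-1),\frac12(n-1)\right]$, so its spread is at most $n-1$. By subadditivity of the spread applied to $H(L)=L_{\mathrm{sym}}+R$ on $\mathbf{e}^{\perp}$, one gets $\sp{\Gamma}\le(n-1)+(n-1)=2(n-1)$, already improving~\eqref{eq:unwght-spread-bound} from slope $\frac52$ to slope $2$. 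I would also record the complement symmetry $R(\comp{\Gamma})=-R$ and $L_{\mathrm{sym}}(\comp{\Gamma})=(nI-J)-L_{\mathrm{sym}}$, where $J$ is the all-ones matrix, which keeps the argument symmetric in $\Gamma$ and $\comp{\Gamma}$.

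The crux is to remove the second factor of $(n-1)$. The slope-$2$ estimate is lossy because it optimizes $L_{\mathrm{sym}}$ and $R$ independently, whereas they are strongly coupled: a vertex with large imbalance $\abs{r_{i}}=\frac12\abs{d^{+}(i)-d^{-}(i)}$ necessarily has large symmetrized degree $\frac12(d^{+}(i)+d^{-}(i))\ge\abs{r_{i}}$, so any vector that concentrates on sources and sinks to exploit $R$ is simultaneously penalized by $L_{\mathrm{sym}}$. To make this quantitative I would take orthonormal extremal vectors $\mathbf{u},\mathbf{v}\perp\mathbf{e}$ attaining $\beta(\Gamma)$ and $\alpha(\Gamma)$ and write
\[
\sp{\Gamma}=\bigl(\mathbf{u}^{T}L_{\mathrm{sym}}\mathbf{u}-\mathbf{v}^{T}L_{\mathrm{sym}}\mathbf{v}\bigr)+\sum_{i}r_{i}\bigl(u_{i}^{2}-v_{i}^{2}\bigr),
\]
where $\sum_{i}(u_{i}^{2}-v_{i}^{2})=0$, and then bound the imbalance sum against the symmetric part using a cut/isoperimetric inequality that controls the net imbalance across a level set by its symmetrized edge boundary. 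Optimizing the resulting combined expression over degree sequences is where I expect the real work to lie.

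I expect the coupling step to be the main obstacle. The two extremal vectors are generally distinct, $L_{\mathrm{sym}}$ and $R$ do not commute, and the interaction is invisible to subadditive spread bounds; a Frobenius/Mirsky estimate on $Q^{*}H(L)Q$ also fails, since $\sum_{i}(d^{+}(i))^{2}$ can be cubic in $n$ and swamps the trace correction. A realistic fallback is to combine the analytic bound above with the empirical classification suggested by Figures~\ref{fig:balanced_vals} and~\ref{fig:polygonal_vals}: identify the finitely many degree configurations that can push $\alpha(\Gamma)+\alpha(\comp{\Gamma})$ below $0$ (concentrated source/sink structures, such as the order-$5$ example), and settle those by a perturbation or compactness argument. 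I would not be surprised if the honest endpoint is a clean proof of $\sp{\Gamma}\le cn$ for some $1<c\le 2$, with slope exactly $1$ remaining the sharp and hardest case.
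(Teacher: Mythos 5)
First, note that the statement you are addressing is stated in the paper as an open conjecture: the paper offers no proof of Conjecture~\ref{con:unwght-spread-bound}, so there is nothing to compare your argument against except the conjecture itself. Judged on its own terms, your proposal contains one correct and genuinely useful partial result, but it does not prove the conjecture, and you say as much yourself. The decomposition $H(L)=L_{\mathrm{sym}}+R$ with $R=\frac{1}{2}\diag{d^{+}-d^{-}}$ is right: since $H(L)=D^{+}-\frac{1}{2}(A+A^{T})$ and the symmetrized weighted degree of vertex $i$ is $\frac{1}{2}(d^{+}(i)+d^{-}(i))$, the leftover diagonal is exactly $\frac{1}{2}(d^{+}(i)-d^{-}(i))$, which is traceless. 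Corollary~\ref{cor:wght-balanced-spread} does apply to $L_{\mathrm{sym}}$ (weights in $\{0,\tfrac12,1\}$), the restricted spread of the diagonal matrix $R$ is at most $\max_{i}r_{i}-\min_{i}r_{i}\leq n-1$, and spread is subadditive for Hermitian summands, so $\sp{\Gamma}\leq 2(n-1)$ is a valid improvement of~\eqref{eq:unwght-spread-bound} from slope $\tfrac{5}{2}$ to slope $2$. That much would be worth writing up as a proposition.

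The gap is everything after that. The step from slope $2$ to slope $1$ is the entire content of the conjecture, and your proposal replaces it with a plan: you identify the coupling $\abs{r_{i}}\leq\frac{1}{2}(d^{+}(i)+d^{-}(i))$ and propose to control $\sum_{i}r_{i}(u_{i}^{2}-v_{i}^{2})$ by an unspecified isoperimetric inequality against the symmetrized quadratic form, but no such inequality is stated or proved, the two extremal vectors $\mathbf{u},\mathbf{v}$ are distinct so no single Rayleigh quotient controls both terms, and you explicitly concede that the honest endpoint of your method may be $\sp{\Gamma}\leq cn$ for some $c$ strictly between $1$ and $2$. Your proposed fallback (classifying the ``finitely many degree configurations'' that push $\alpha(\Gamma)+\alpha(\comp{\Gamma})$ below $0$) is not a finite problem as stated, since the number of such configurations grows with $n$, and no compactness structure is identified that would make it one. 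So the conjecture remains open under your approach; what you have is a correct slope-$2$ theorem plus a research program for the remaining factor.
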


%%%%%%%%%%%%%%%%%%%%%%%%%%%%%%%%%%%%%%%%%%%%%%%%%%%%%%
%                  Acknowledgments
%%%%%%%%%%%%%%%%%%%%%%%%%%%%%%%%%%%%%%%%%%%%%%%%%%%%%%
\section*{Acknowledgments}
The authors are indebted to Dr. Jonad Pulaj for noting the total unimodularity property of the arc-incidence matrix of a digraph used in the proof of Theorem~\ref{thm:balanced-conv-hull}.

%%%%%%%%%%%%%%%%%%%%%%%%%%%%%%%%%%%%%%%%%%%%%%%%%%%%%%
%                     References
%%%%%%%%%%%%%%%%%%%%%%%%%%%%%%%%%%%%%%%%%%%%%%%%%%%%%%
\bibliographystyle{siam}
\bibliography{references}
\end{document}